\newtheorem{theorem}{Theorem}[section]
\newtheorem{lemma}[theorem]{Lemma}
\newtheorem{Assumption}{Assumption}[section]
\theoremstyle{definition}
\newtheorem{definition}[theorem]{Definition}
\theoremstyle{remark}
\newtheorem{remark}[theorem]{Remark}
\numberwithin{equation}{section}
\newcommand{\dd}{\mathrm{d}}
\begin{document}

\title{Large deviations of reflected weakly interacting particle systems}

\author{Ping Chen}
\address{School of Mathematical Sciences, University of Science and Technology of China, Hefei, 230026, China.}
\email{chenping@mail.ustc.edu.cn}

\author{Rong Wei}
\address{School of Mathematics and Statistics, Anhui Normal University, Wuhu 241002, China.}
\email{rongw@.ahnu.edu.cn}

\author{Tusheng Zhang}
\address{Department of Mathematics, University of Manchester, Oxford Road, Manchester M13 9PL, United Kingdom}
\email{tusheng.zhang@manchester.ac.uk}

\subjclass[2020]{Primary 60F10, 60K35; secondary 60B10, 60H10, 34K50, 93E20.}



\keywords{large deviation; interacting particle systems; McKean--Vlasov equation; Stochastic differential equation with reflection; weak convergence; sub-martingale problem}

\begin{abstract}
In this paper, we prove a large deviation principle for the empirical measures of a system of weakly interacting diffusion with reflection. We adopt the weak convergence approach.  To make this approach work, we show that the sequence of empirical measures of the controlled reflected system will converge to the weak solution of an associated reflected McKean--Vlasov equation.
\end{abstract}

\maketitle

\section{Introduction}
Let $(\Omega,\mathcal{F},\mathbb{P})$ be a probability space and $\mathcal{D}$ an open domain in $\mathbb{R}^{d}$. The purpose of this paper is  to establish a large deviation principle for the empirical measures of weakly interacting particle systems, which are given by reflected stochastic differential equations (RSDEs) of the following form: for fixed $N\in\mathbb{N}$ and finite interval $[0,T]$, $i\in \{1, \ldots, N \}$,
\begin{align}
\left\{
\begin{aligned}\label{II-eq2.1}
\mathrm{d}X^{i,N}(t)&=b(t,X^{i,N}(t),\mu^{N}(t))\mathrm{d}t+\sigma(t,X^{i,N}(t),\mu^{N}(t))\mathrm{d}W^{i}(t)\\
& \ \ \ \ -\mathrm{d}K^{i,N}(t),\\
|K^{i,N}|(t)&=\int^{t}_{0}\mathbf{1}_{\partial \mathcal{D}}(X^{i,N}(s))\mathrm{d}|K^{i,N}|(s),\\
K^{i,N}(t)&=\int^{t}_{0}\mathbf{n}(X^{i,N}(s))\mathrm{d}|K^{i,N}|(s),\\
X^{i,N}(0)&=x^{i,N},\quad X^{i,N}(t)\in\overline{\mathcal{D}},\quad t\in[0,T]
\end{aligned}
\right.
\end{align}
    where $\mathbf{n}(x)$ is the outward unit normal vector at $x\in\partial{\mathcal{D}}$, $\{W^{i},1\leq i\leq N\}$ are independent standard $d_{1}$-dimensional Wiener processes and $K^{i,N}$ is a bounded variation process with variation $|K^{i,N}|$ acting as a local time that constrains the process $X^{i, N}$ to the domain $\overline{\mathcal{D}}$. Here, $x^{i,N}\in\overline{\mathcal{D}}$ and
$$\mu^{N}(t,\omega):=\frac{1}{N}\sum_{i=1}^{N}\delta_{X^{i,N}(t,\omega)},\quad \omega\in\Omega,$$
is the empirical measure of $(X^{1,N},\ldots,X^{N,N})$ at time $t\in [0,T]$.
For the precise conditions on $\mathcal{D}$, $b$ and $\sigma$, we refer the readers to Section \ref{sec:main_result}.

Large deviations of weakly interacting diffusion can characteristic the rate of convergence of the particle system converging to a Mckean--Vlasov stochastic differential equations. The pioneer work \cite{DG} for large deviation of weakly interacting diffusion  considers a system of uniformly non-degenerate diffusion with interacting in the drift and establishes a large deviation principle for the empirical measures using discretization arguments and exponential estimates. There is now a number of  papers on the large deviations of weakly interacting diffusions for a variety of models, including  multilevel large deviations \cite{DG1,DS}, jump diffusions \cite{L1,L2}, discrete-time system \cite{DM,DG3} and singular interaction \cite{AG,HHMO}, interaction with common noise \cite{BC}.

However, there is no study so far on the large deviation of empirical measures of weakly interacting diffusion with reflection. The aim of this work is to fill  the gap.  Our work is mainly inspired by the work \cite{BudhirajaDF}. We will use the weak convergence methods, no longer requiring any time or space discretization of the system and exponential probability estimates, along the same lines of the approaches in \cite{BudhirajaDF} where the authors considered weakly interacting diffusion with no reflection. As in  \cite{BudhirajaDF}, we allow the driving noise to be  degenerate and also allow both the drift and the diffusion coefficients to depend on interactions.

The early work to study  McKean--Vlasov stochastic differential equations (SDEs) with reflecting boundaries goes back to  \cite{SznitamnA}, in which the pathwise well-posedness and propagation of chaos are proved. McKean--Vlasov SDEs are SDEs which are related to the mean-field interacting diffusions and where the coefficients depend on the law of the solution.
In a recent work \cite{CDFGGM}, the authors study a system, which is the combination of the
interaction keeping the average position prescribed, and the reflection at the boundaries. And they show pathwise well-posedness for the McKean--Vlasov SDE and the propagation of chaos. There is also some work on the small noise large deviations of reflected McKean--Vlasov SDEs, see \cite{AdamsReisRavaille,WYZ}.

With the weak convergence approach, we reduce the proof of the large deviation principle to the study of asymptotic properties of certain controlled versions of the original system with reflection. The main ingredients in our argument  are to characterize the weak limits of the controlled processes. The most closed work to ours is \cite{BudhirajaDF}, which considers large deviation property of weakly interacting systems of stochastic differential equations.
Although we both use the weak convergence approach,  there are major  differences due to the reflection on the boundary of the domain. Particularly,  the method of obtaining the Laplace upper bound in \cite{BudhirajaDF} no longer works for our model due to the reflection. Instead, we fully exploit the close relationship between stochastic differential equations with reflecting boundary and the so-called submartingale problems.
\vskip 0.3cm
The remainder of this paper is organized as follows. In Section \ref{sec:main_result}, we describe the  framework, present  the main result of the paper, and then outline how the result will be proved. In Section \ref{AC}, we consider the sub-martingale characterization of a weak solution of reflected stochastic differential equations, which will be used in the proof. Finally, we complete the proof of the main result  by showing the Laplace upper and lower bounds in Section \ref{sec:variational}.
\vskip 0.4cm
We end this section with some standard notations.

Let $\overline{\mathcal{D}}$ be the closure of $\mathcal{D}$. Let $(\Omega,\mathcal{F}, (\mathcal{F}_{t})_{t\in [0,T]},\mathbb{P})$ be a filterted probability space where $(\mathcal{F}_{t})_{t\in[0,T]}$ satisfies the usual conditions.
Let $\mathcal{X}:=C([0,T];\overline{\mathcal{D}})$ and $\mathcal{W}:=C([0,T];\mathbb{R}^{d_1})$, equipped with maximum norm, which is denoted by $\|\cdot\|_\infty$. Let $|\cdot|$ be the Euclidean norm and $\| \cdot \|$ stand for the Hilbert--Schmidt norm, i.e.,
$\|\sigma\|^2 :=  \sum_{i=1}^{d} {\sum_{j=1}^{d_1}{\sigma_{ij}^2}}$
for any $d \times d_1$-matrix $\sigma = (\sigma_{ij}) \in \mathbb{R}^{d \times d_1} $. Given a Polish space $(E,d_{E})$, define $\mathcal{B}(E)$ as the Borel $\sigma$-field, denote by $\mathcal{P}(E)$ the space of probability measures on $\mathcal{B}(E)$, which is equipped with the topology of weak convergence. A convenient metric on this space is the bounded Lipschitz metric, which is denoted by
$$\Pi_{E}(\mu,\nu):=\sup_{f \in \mbox{Lip}_{1}(E)}\int_{E}f\mathrm{d}(\mu-\nu), \ \ \mu,\nu \in \mathcal{P}(E),$$
 where $\mbox{Lip}_{1}(E):=\{ f \in C(E): \sup_{x\in E}|f(x)|\leq 1, \sup_{x\neq y\in E}\frac{|f(x)-f(y)|}{d_{E}(x,y)}\leq 1\}$.
 Let $C_b(E)$ be the set of real-valued bounded continuous functions on $E$.
Let $C^{1,2}([0,T]\times \overline{\mathcal{D}})$ denote the set of real-valued functions, whose elements are continuously differentiable once with respect to the time variable and twice with respect to the space variable.
For $f\in C^{1,2}([0,T]\times\overline{\mathcal{D}})$, $\nabla_xf$ denotes the gradient of $f$ with respect to the spatial variable $x$.
Analogously, we can define $C^{1,2,2}([0,T]\times\overline{\mathcal{D}}\times \mathbb{R}^{d_1} )$, $C^{1,2,2}_{b}([0,T]\times\overline{\mathcal{D}}\times \mathbb{R}^{d_1} )$, $C^{1,2,2}_{0}([0,T]\times\overline{\mathcal{D}}\times \mathbb{R}^{d_1} )$.

\section{Main result}\label{sec:main_result}

Let $\mathcal{R}$ denote the set of all positive measures on $\mathcal{B}(\mathbb{R}^{d_{1}}\times[0,T])$, say $r$, such that $r(\mathbb{R}^{d_{1}}\times[0,t])=t$ for all $ t\in[0,T]$, which will be  the space of all deterministic relaxed controls on $\mathbb{R}^{d_{1}}\times[0,T]$. We also denote
\begin{eqnarray*}
\mathcal{R}_{1}&:=&\Big\{r\in\mathcal{R}:\int_{\mathbb{R}^{d_{1}}\times[0,T]}|y|r(\mathrm{d}y\times\mathrm{d}t)<\infty \Big\},
\end{eqnarray*}
as the space of all deterministic relaxed controls on $\mathbb{R}^{d_{1}}\times[0,T]$ with finite first moments.
We equip $\mathcal{R}$ and $\mathcal{R}_{1}$ with the topology of weak convergence of measures and weak convergence of measures plus convergence of the first moments respectively, which turns $\mathcal{R}$ and $\mathcal{R}_{1}$ into Polish spaces. If $r\in\mathcal{R}$ and $B\in\mathcal{B}(\mathbb{R}^{d_{1}})$, the mapping $[0,T]\ni t\mapsto r(B\times[0,t])$ will be absolutely continuous, hence differentiable almost everywhere. Since $\mathcal{B}(\mathbb{R}^{d_{1}})$ is countably generated, the time derivative of $r$, denoted by $r_t$, exists almost everywhere and is a measurable mapping
from $[0,T]$ to $\mathcal{P}(\mathbb{R}^{d_{1}})$, such that
$r(\mathrm{d}y\times\mathrm{d}t)=r_{t}(\mathrm{d}y)\mathrm{d}t$.

Let $b:[0,T]\times\overline{\mathcal{D}}\times\mathcal{P}(\overline{\mathcal{D}})\to\mathbb{R}^{d}$ and $\sigma:[0,T]\times\overline{\mathcal{D}}\times\mathcal{P}(\overline{\mathcal{D}})\to\mathbb{R}^{d\times d_{1}}$ be measurable mappings.
Given a Borel measurable mapping $\nu:[0,T] \to\mathcal{P}(\overline{\mathcal{D}})$ and an adapted $\mathcal{R}_1$-valued random variable $\rho$.
We will consider the controlled RSDEs:
\begin{align}\label{II-eq2.5}
\left\{
\begin{aligned}
 \mathrm{d}\bar{X}(t)&=b(t,\bar{X}(t),\nu(t))\mathrm{d}t+\int_{\mathbb{R}^{d_{1}}}\sigma(t,\bar{X}(t),\nu(t))y\rho_{t}(\mathrm{d}y)\mathrm{d}t\\
& \ \ \ \ +\sigma(t,\bar{X}(t),\nu(t))\mathrm{d}W(t)-\mathrm{d}\bar{K}(t),\\
 |\bar{K}|(t)&=\int^{t}_{0}\mathbf{1}_{\partial \mathcal{D}}(\bar{X}(s))\mathrm{d}|\bar{K}|(s),\quad \bar{K}(t)=\int^{t}_{0}\mathbf{n}(\bar{X}(s))\mathrm{d}|\bar{K}|(s),\\
\nu(0)&=\mbox{Law}(\bar{X}(0)),\quad \bar{X}(t)\in\overline{\mathcal{D}}, \quad t \in [0,T],\\
\end{aligned}
\right.
\end{align}
where $W$ is a $d_{1}$-dimensional $\mathcal{F}_{t}$-adapted standard Wiener process. The above equation is the controlled analogue of the following reflected McKean-Vlasov SDEs:
\begin{align}\label{II-eq2.2}
\left\{
\begin{aligned}
& \mathrm{d}X(t)=b(X(t),\mbox{Law}(X(t)))\mathrm{d}t+\sigma(X(t),\mbox{Law}(X(t)))\mathrm{d}W(t)-\mathrm{d}K(t), \\
& |K(t)|=\int^{t}_{0}\mathbf{1}_{\partial \mathcal{D}}(X(s))\mathrm{d}|K|(s),\quad K(t)=\int^{t}_{0}\mathbf{n}(X(s))\mathrm{d}|K|(s),\\
& \mbox{Law}(X(0))=\nu_{0},\quad X(t)\in\overline{\mathcal{D}}, \quad t\in[0,T],
\end{aligned}
\right.
\end{align}
where $W$ is a $d_{1}$-dimensional $(\mathcal{F}_{t})$-adapted standard  Wiener process.

Recall the system given by \eqref{II-eq2.1}: for fixed $N\in\mathbb{N}$ and finite interval $[0,T]$, $i\in \{1, \ldots, N \}$,
\begin{align*}
\left\{
\begin{aligned}\label{II-eq2.2-1}
\mathrm{d}X^{i,N}(t)&=b(t,X^{i,N}(t),\mu^{N}(t))\mathrm{d}t+\sigma(t,X^{i,N}(t),\mu^{N}(t))\mathrm{d}W^{i}(t)\\
& \ \ \ \ -\mathrm{d}K^{i,N}(t),\\
|K^{i,N}|(t)&=\int^{t}_{0}\mathbf{1}_{\partial \mathcal{D}}(X^{i,N}(s))\mathrm{d}|K^{i,N}|(s),\\
K^{i,N}(t)&=\int^{t}_{0}\mathbf{n}(X^{i,N}(s))\mathrm{d}|K^{i,N}|(s),\\
X^{i,N}(0)&=x^{i,N},\quad X^{i,N}(t)\in\overline{\mathcal{D}},\quad t\in[0,T].
\end{aligned}
\right.
\end{align*}

 Define $$\mu^{N}(\cdot,\omega):=\frac{1}{N}\sum_{i=1}^{N}\delta_{X^{i,N}(\cdot,\omega)}, \quad \omega\in\Omega$$
as the empirical measure of $(X^{1,N},\ldots,X^{N,N})$ over the path space $\mathcal{X}:=C([0,T];\overline{\mathcal{D}})$.
For any $t\in [0,T]$, $\mu^{N}$ and $\mu^{N}(t)$ have relationship as
$$\mu^{N}(t)=\mu^{N}\circ\pi^{-1}_{t},$$
where $\pi_{t}:\mathcal{X} \to \overline{\mathcal{D}}$ is a projection map at time $t$. It is known that under mild conditions on the coefficients the empirical measures $\mu^{N}$ converges to the law of the solution to reflected McKean-Vlasov SDE \eqref{II-eq2.2}.

Let $\mathcal{Z}:=\mathcal{X}\times\mathcal{R}_{1}\times \mathcal{W}$. For any $z\in \mathcal{Z}$, we write it as $(\phi,r,w)$ with the understanding that $\phi\in \mathcal{X}$, $r \in \mathcal{R}_{1}$, and $w\in \mathcal{W}$.
If the triple $(\bar{X},\rho,W)$ defined on some filtered probability space $(\tilde{\Omega},\tilde{\mathcal{F}},(\tilde{\mathcal{F}}_{t}),\tilde{\mathbb{P}}
)$ solves equation \eqref{II-eq2.5} for
some measurable $\nu:[0,T]
\to\mathcal{P}(\overline{\mathcal{D}})$, then the distribution of $(\bar{X},\rho,W)$ under
$\tilde{\mathbb{P}}$ is an element of $\mathcal{P}(\mathcal{Z})$ and is called a weak solution of  equation \eqref{II-eq2.5}. For any $\Theta\in\mathcal{P}(\mathcal{Z})$, define  $\nu_{\Theta}:[0,T]\to\mathcal{P}(\overline{\mathcal{D}})$ as
\begin{equation}\label{II-eq2.6}
    \nu_{\Theta}(t)(B):=\Theta\big(\{(\phi,\rho,w)\in\mathcal{Z}:
    \phi(t)\in B\}\big),\quad B\in\mathcal{B}(\overline{\mathcal{D}}),\quad t\in[0,T],
\end{equation}
which is the distribution under $
\Theta$ of the first component of $Z$ at time $t$.
Note that if $\Theta$ is a weak solution of equation \eqref{II-eq2.5} with $\nu_{\Theta}(t)=\nu(t),\ \forall t \in [0,T]$, then $\Theta$ is a weak solution of the controlled RSDEs
\begin{align}\label{II-eq2.7}
\left\{
\begin{aligned}
&\mathrm{d}\bar{X}(t)=b(t,\bar{X}(t),\nu_{\Theta}(t))\mathrm{d}t+\int_{\mathbb{R}^{d_{1}}}\sigma(t,\bar{X}(t),\nu_{\Theta}(t))y\rho_{t}(\mathrm{d}y))\mathrm{d}t\\
&\quad\quad\quad\quad+\sigma(t,\bar{X}(t),\nu_{\Theta}(t)))\mathrm{d}W(t)-\mathrm{d}\bar{K}(t),\\
& |\bar{K}|(t)=\int^{t}_{0}\mathbf{1}_{\partial \mathcal{D}}(\bar{X}(s))\mathrm{d}|\bar{K}|(s),\quad \bar{K}(t)=\int^{t}_{0}\mathbf{n}(\bar{X}(s))\mathrm{d}|\bar{K}|(s),\\
&\nu_{\Theta}(t)=\mbox{Law}(\bar{X}(t)),\quad \bar{X}(t)\in\overline{\mathcal{D}},\quad t\in[0,T].\\
\end{aligned}
\right.
\end{align}

For a probability measure $\Theta \in  \mathcal{P}(\mathcal {Z})$, let $\Theta_{ \mathcal {X}}$, $\Theta_{ \mathcal {R}}$ and $\Theta_{ \mathcal {\mathcal{W}}}$ denote the first, second and the third marginal measures, respectively. Let $\mathcal {P}_{\infty}$ be the set of all probability measures $\Theta \in \mathcal {P}(\mathcal {Z}) $ such that
\begin{itemize}
\item[(i)]
\[\int_{\mathcal {R}_{1}}\int_{\mathbb{R}^{d_1}\times [0,T]}|y|^2r(\dd y \times \dd t)\Theta_{\mathcal{R}}(\dd r)<\infty.\]
\item[(ii)] $\Theta$ is a weak solution to equation \eqref{II-eq2.7}.\\
\item[(iii)] $\nu_{\Theta}(0)=\nu_{0}$, where $\nu_{0}\in \mathcal {P}(\overline{\mathcal{D}})$ is the initial distribution.
\end{itemize}




Define the map $\bar{v}: \mathcal{Z}
\to \mathcal{Z}^{0}:=\overline{\mathcal{D}} \times \mathcal{R}_{1} \times \mathcal{W}$ as $\bar{v}(\phi, r, w)=(\phi(0), r, w)$.
\begin{definition}[Weak Uniqueness]
\label{def2.1}
We say that the solution of equation \eqref{II-eq2.7} is weakly unique if for any $\Theta$ and $\tilde{\Theta} \in \mathcal{P}_\infty$ such that $\Theta \circ \bar{v}^{-1}= \tilde{\Theta}\circ \bar{v}^{-1}$,
we have $\Theta = \tilde{\Theta}$.
\end{definition}

Now we are going to state the precise assumption and the main result.

\begin{definition}[Rate Function \cite{DZ,DupuisEllis}]
A function $I: \mathcal{P}(\mathcal{X}) \to [0,\infty]$ is called a rate function on $\mathcal{P}(\mathcal{X})$, if for any $C<\infty$, the level set $\{\theta\in \mathcal{P}(\mathcal{X}): I(\theta)\leq C\}$ is compact.
\end{definition}

\begin{definition}[Large Deviation Principle \cite{DZ,DupuisEllis}]
Let $I$ be a rate function on $\mathcal{P}(\mathcal{X})$. The sequence $\{\mu^{N},N\in\mathbb{N}\}$ is said to satisfy large deviation principle on $\mathcal{P}(\mathcal{X})$ with the rate function $I$ if, for all Borel subset $\Gamma$ of $\mathcal{P}(\mathcal{X})$,
\begin{equation*}
-\inf_{\theta\in \Gamma^{o}}I(\theta)\leq\liminf_{N\to\infty}\frac{1}{N}\log \mathbb{P}\{\mu^{N}\in\Gamma\}\leq\limsup_{N\to\infty}\frac{1}{N}\log \mathbb{P}\{\mu^{N}\in\Gamma\}\leq -\inf_{\theta\in\overline{\Gamma}}I(\theta),
\end{equation*}
where $\Gamma^{o}$ is the interior of $\Gamma$.
\end{definition}

\begin{definition}[Laplace Principle \cite{DupuisEllis}]
Let $I$ be a rate function on $\mathcal{P}(\mathcal{X})$. The sequence $\{\mu^{N},N\in\mathbb{N}\}$ is said to satisfy the Laplace principle on $\mathcal{P}(\mathcal{X})$ with rate function $I$ if for all bounded continuous function $F:\mathcal{P}(\mathcal{X})\to\mathbb{R}$,
\begin{equation*}\label{II-eq2.9}
\lim_{N\to\infty}-\frac{1}{N}\log\mathbb{E}\big\{\exp[-N\cdot  F(\mu^{N})]\big\}=\inf_{\theta\in\mathcal{P}(\mathcal{X})}\{F(\theta)+I(\theta)\}.
\end{equation*}
\end{definition}
It is known that the Laplace principle holds in the above setting if and only if $\{\mu^{N},N\in\mathbb{N}\}$ satisfies a large deviation principle with rate function $I$ (see \cite[Section 1.2]{DupuisEllis}).

Let us make the following assumptions with respect to the domain, coefficients $b$, $\sigma$, and the family $\{x^{i,N}\}\subset\overline{\mathcal{D}}$ of initial conditions:
\begin{Assumption}\label{Ass2.1}
\begin{itemize}
\item [(A1)]\label{assum:1} $\mathcal{D} $ is a bounded, convex, smooth domain in $\mathbb{R}^{d}$.
\item [(A2)]There exists $\nu_{0}\in\mathcal{P}(\overline{\mathcal{D}})$ such that for all $\nu_{0}$-integrable $f:\overline{\mathcal{D}} \rightarrow \mathbb{R}$,
\[ \lim_{N \rightarrow \infty}\frac{1}{N}\sum_{i=1}^{N}f(x^{i,N}) = \int_{\overline{\mathcal{D}}}f(x)\mathrm{d}\nu_{0}(x). \]
\item [(A3)] Let $b:[0,T]\times\overline{\mathcal{D}}\times\mathcal{P}(\overline{\mathcal{D}})\to\mathbb{R}^{d}$ and $\sigma:[0,T]\times\overline{\mathcal{D}}\times\mathcal{P}(\overline{\mathcal{D}})\to\mathbb{R}^{d\times d_{1}}$ be measurable and there exist constants $L$ and $K \in (0,\infty)$ such that for each $t\in [0,T], x, y \in \overline{\mathcal{D}} $ and $\mu, \nu \in \mathcal{P}(\overline{\mathcal{D}})$,
  \[ |b(t,x, \mu)| +\|\sigma(t,x, \mu) \| \leq L\]
  and
  \[|b(t,x,\mu)-b(t,y,\nu)|+\|\sigma(t,x,\mu)-\sigma(t,y,\nu)\|\leq K(|x-y|+\Pi_{\overline{\mathcal{D}}}(\mu,\nu)).\]
\item [(A4)]Weak uniqueness of solution holds for Eq.\eqref{II-eq2.7}.
\end{itemize}
\end{Assumption}

\begin{remark}
\begin{itemize}
  \item [(R1)]
   Under Assumption~\ref{Ass2.1}(A3), it follows by standard arguments that for each $N$, equation \eqref{II-eq2.1} has a unique solution. 
   Moreover, according to Theorem 3.2 in \cite{AdamsReisRavaille}, equation \eqref{II-eq2.2} admits a unique solution.
  \item [(R2)] The assumption (A4) is satisfied if the diffusion coefficients depended only on state variables like $\sigma(t,x,\mu)=\sigma(t,x)$, or only on distributions such as $\sigma(t,x,\mu)=\sigma(t,\mu)$ (cf. \cite[Lemma 3.1] {BC}).
      \item [(R3)]\label{R1}
 Since the domain $\mathcal{D}$ is convex, we have for any $y\in\mathcal{D}$ and $x\in\partial\mathcal{D}$,
\begin{equation*}
  \langle\mathbf{n}(x),y-x\rangle\leq 0.
\end{equation*}
\end{itemize}
\end{remark}

The main result is stated as follows.
\begin{theorem}\label{Thm2.5}
Let Assumption \ref{Ass2.1} hold. Then the family of empirical measures $\{\mu^{N},N\in\mathbb{N}\}$ of the solutions of the interacting reflected system \eqref{II-eq2.1} satisfies the Laplace Principle with rate function
$$I(\theta)=\inf_{\Theta\in\mathcal{P}_{\infty}:\Theta_{\mathcal{X}}=\theta}\frac{1}{2}\int_{\mathcal{R}_1}\int_{\mathbb{R}^{d_{1}}\times[0,T]}|y|^{2}r(\mathrm{d}y\times\mathrm{d}t)\Theta_{\mathcal{R}}(\mathrm{d}r)$$
where $\theta\in\mathcal{P}(\mathcal{X})$ and  $\inf\emptyset:=\infty$ by convention.
\end{theorem}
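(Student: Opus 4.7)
The plan is to prove the equivalent Laplace principle directly via the weak convergence approach of Boué--Dupuis and Budhiraja--Dupuis. The starting point is the variational representation which, applied to the functional $F\circ \mu^N$ of the underlying Brownian motions $(W^1,\ldots,W^N)$, gives
\begin{equation*}
-\frac{1}{N}\log \mathbb{E}\bigl[\exp(-N F(\mu^N))\bigr] = \inf_{u^N} \mathbb{E}\Bigl[\tfrac{1}{2N}\sum_{i=1}^N\int_0^T |u^N_i(s)|^2\,\mathrm{d}s + F(\bar{\mu}^N)\Bigr],
\end{equation*}
where the infimum runs over $N$-tuples of adapted $\mathbb{R}^{d_1}$-valued controls $u^N_i$ and $\bar{\mu}^N = \frac{1}{N}\sum_i \delta_{\bar X^{i,N}}$ is the empirical measure of the controlled system obtained by replacing $\mathrm{d}W^i$ by $\mathrm{d}W^i + u^N_i(t)\,\mathrm{d}t$ in \eqref{II-eq2.1}. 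Proving the Laplace principle with rate $I$ then reduces to matching asymptotic lower and upper bounds for this variational infimum by $\inf_\theta\{F(\theta)+I(\theta)\}$.

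For the Laplace lower bound, I would pick $\varepsilon$-optimal controls in the variational representation and, after a standard truncation, assume $\frac{1}{2N}\sum_i\mathbb{E}\int_0^T|u^N_i|^2\,\mathrm{d}s$ is uniformly bounded. Each control is lifted to a relaxed control $\rho^N_i(\mathrm{d}y\,\mathrm{d}t):= \delta_{u^N_i(t)}(\mathrm{d}y)\,\mathrm{d}t\in\mathcal{R}_1$, and one forms the random empirical measure $Q^N:=\frac{1}{N}\sum_i \delta_{(\bar X^{i,N},\rho^N_i,W^i)}\in\mathcal{P}(\mathcal{Z})$. The energy bound, boundedness of $\mathcal{D}$, Assumption~\ref{Ass2.1}(A3), and standard moment estimates for reflected SDEs with a bounded-drift perturbation yield tightness of $\{Q^N\}$ and of the mean measures $\{\mathbb{E}[Q^N]\}$. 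Along a subsequence, $Q^N \Rightarrow Q$, and the crucial step is to show that $\mathrm{Law}(Q)$-a.e.\ realisation $\Theta$ lies in $\mathcal{P}_\infty$ with $\nu_\Theta(0)=\nu_0$; lower semicontinuity of the quadratic cost in the $\mathcal{R}_1$-topology combined with Fatou's lemma then delivers the inequality.

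The Laplace upper bound requires, given a near-optimal $\Theta\in\mathcal{P}_\infty$ with $\Theta_\mathcal{X}=\theta$, constructing controls on the $N$-particle system whose controlled empirical measures converge to $\theta$ with cost converging to $\tfrac12\int\int|y|^2 r(\mathrm{d}y\,\mathrm{d}t)\Theta_\mathcal{R}(\mathrm{d}r)$. As the authors emphasise, the pathwise construction of \cite{BudhirajaDF} fails here because the reflection $\mathrm{d}\bar K$ in \eqref{II-eq2.5} cannot simply be inserted ex post into the drift. I would instead sample i.i.d.\ triples $(\bar X_i,\rho_i,W_i)$ from $\Theta$, use the relaxed controls $\rho_i$ to drive the controlled reflected $N$-particle system \eqref{II-eq2.5} on a common probability space, and pass to the limit. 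The identification of the limit as $\theta$ is the main difficulty, and this is where the submartingale characterisation developed in Section~\ref{AC} is decisive: rather than testing the limit against arbitrary $C^{1,2}$ functions by Itô's formula (which is unstable under weak convergence because of the boundary local time), one tests against functions whose conormal derivative has the correct sign on $\partial\mathcal{D}$, producing submartingale inequalities. Together with Assumption~\ref{Ass2.1}(A4) these inequalities pin down the limit as the unique element of $\mathcal{P}_\infty$ with the prescribed first marginal.

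The principal obstacle is precisely this identification under the reflecting boundary: standard propagation-of-chaos and martingale problem techniques fail because the bounded variation processes $|K^{i,N}|$ are non-smooth functionals of the sample path and cannot be passed through Itô's formula in the limit. Replacing martingale test functions with the submartingale class of Section~\ref{AC} side-steps this, since submartingale inequalities are stable under weak convergence; together with weak uniqueness they determine the limit law. Once both Laplace bounds are in place, Theorem~\ref{Thm2.5} follows from the standard equivalence between the Laplace principle and the large deviation principle in Polish spaces.
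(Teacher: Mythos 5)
Your proposal follows the same overall strategy as the paper: start from the Budhiraja--Dupuis variational representation, reduce to the two Laplace bounds, and in both directions identify weak limits of the relaxed-control empirical measures $Q^N$ through the submartingale characterisation of Section~\ref{AC}. You have correctly located the key ideas: the lift of controls to $\mathcal{R}_1$-valued relaxed controls, the tightness argument for $\{Q^N\}$ (Lemma~\ref{tightness}), and especially the observation that testing against $f$ with $\langle\nabla_x f,\mathbf{n}\rangle\leq 0$ on $\partial\mathcal{D}$ turns the local-time contribution into a term of definite sign, so that only an \emph{inequality} needs to survive the passage to the weak limit (Theorem~\ref{lem2}). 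Your ``upper/lower'' naming is swapped relative to the paper's convention, but that is purely cosmetic.

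There is one genuine gap in your construction for the limsup bound (your ``Laplace upper bound'', the paper's Step~2). You propose to sample i.i.d.\ triples $(\bar X_i,\rho_i,W_i)$ from $\Theta$ and then run the controlled reflected particle system. But the system \eqref{II-eq2.3} to be controlled has the \emph{prescribed deterministic} initial conditions $x^{i,N}$ of Assumption~\ref{Ass2.1}(A2), whereas i.i.d.\ sampling from $\Theta$ produces random initial positions $\phi_i(0)\sim\nu_0$. If you discard the sampled $\phi_i(0)$ and simply start the $i$-th particle at $x^{i,N}$, the relaxed controls $\rho_i$ carry the \emph{unconditional} law $\Theta\circ(\rho,W)^{-1}$, so the empirical measure of $(x^{i,N},\rho_i,W_i)$ converges to $\nu_0\otimes\Theta\circ(\rho,W)^{-1}$ rather than to $\Theta\circ\bar{v}^{-1}$; since $\rho$ is in general \emph{not} independent of $\bar X(0)$ under $\Theta$, the weak-uniqueness Assumption~(A4) cannot then be invoked to conclude $\tilde Q_\omega=\Theta$. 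The paper avoids this by disintegrating $\Theta\circ\bar{v}^{-1}=\nu_0(\mathrm{d}\phi_0)\,\Theta_{\mathcal{W}}(\mathrm{d}w)\,\bar{\lambda}(\mathrm{d}r\,|\,\phi_0,w)$ and sampling the $i$-th control from the \emph{conditional} kernel $\bar{\lambda}(\,\cdot\,|\,x^{i,N},w_i)$; combined with~(A2), this gives $\mathbb{P}_N\circ(\lambda^N)^{-1}\to\delta_{\Theta\circ\bar{v}^{-1}}$, which is exactly what~(A4) needs. The conditioning on the fixed initial data is not a detail you can defer: without it the identification of the limit in the limsup argument fails.
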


To explain the steps of the proof, we need the controlled system of RSDEs \eqref{II-eq2.1}.
For $N\in\mathbb{N}$, let
$\mathcal{H}_{N}$ be the space of all $(\mathcal{F}_{t})$-progressively measurable functions $h:[0,T]\times\Omega \to \mathbb{R}^{N\times d_{1}}$ such that
$$\mathbb{E}\left[\int^{T}_{0}|h(t)|^{2}\mathrm{d}t\right]<\infty,$$
where $h$ is written as $h=(h_{1},\ldots,h_{N})$, and $h_{i}$ is its $i$-th entry, which is $d_1$-dimensional.

Given $h\in\mathcal{H}_{N}$, we consider the following controlled system of RSDEs
\begin{align}
\left\{
\begin{aligned}\label{II-eq2.3}
& \mathrm{d}\bar{X}^{i,N}(t)=b(t,\bar{X}^{i,N}(t),\bar{\mu}^{N}(t))\mathrm{d}t+\sigma(t,\bar{X}^{i,N}(t),\bar{\mu}^{N}(t))h_{i}(t)\mathrm{d}t\\
&\qquad\quad\quad \quad+\sigma(t,\bar{X}^{i,N}(t),\bar{\mu}^{N}(t))\mathrm{d}W^{i}(t)-\mathrm{d}\bar{K}^{i,N}(t),\\
& |\bar{K}^{i,N}|(t)=\int^{t}_{0}\mathbf{1}_{\partial \mathcal{D}}(\bar{X}^{i,N}(s))\mathrm{d}|\bar{K}^{i,N}|(s),\\
& \bar{K}^{i,N}(t)=\int^{t}_{0}\mathbf{n}(\bar{X}^{i,N}(s))\mathrm{d}|\bar{K}^{i,N}|(s),\\
& \bar{X}^{i,N}(0)=x^{i,N},\quad \bar{X}^{i,N}(t)\in\overline{\mathcal{D}},\quad t\in [0,T],
\end{aligned}
\right.
\end{align}
where $\bar{\mu}^{N}(t)$ is empirical measure of $(\bar{X}^{1,N}(t),\ldots,\bar{X}^{N,N}(t))$ defined by
$$\bar{\mu}^{N}(t,\omega):=\frac{1}{N}\sum_{i=1}^{N}\delta_{\bar{X}^{i,N}(t,\omega)}, \quad \omega\in\Omega.$$
We also use $\bar{\mu}^{N}$ to denote the process version of $\bar{\mu}^{N}(t)$.

\vskip 0.4cm

The proof of Theorem \ref{Thm2.5} is based on following variational representation given by Theorem 3.6 in \cite{BudhirajaDupuis}, i.e. for any $F\in C_{b}(\mathcal{P}(\mathcal{X}))$
\begin{equation*}\label{II-eq2.10}
    -\frac{1}{N}\log\mathbb{E}\left\{\exp[-N\cdot  F(\mu^{N})]\right\}=\inf_{h^{N}\in\mathcal{H}_{N}}\Big\{\frac{1}{2}\mathbb{E}\big[\frac{1}{N}\sum^{N}_{i=1}\int^{T}_{0}|h^{N}_{i}|^{2}\mathrm{d}t\big]+\mathbb{E}\left[F(\bar{\mu}^{N})\right]\Big\},
\end{equation*}
where $\bar{\mu}^{N}$ is the empirical measure of the solution to equation \eqref{II-eq2.3} for $h^{N}\in \mathcal{H}_{N}$.
\vskip 0.3cm
According to the arguments in \cite[section 3]{BudhirajaDF}
,the Laplace principle (hence the large deviation principle) can be established through the following two steps.
\begin{itemize}
\item[{\bf Step 1:}]
We establish the Laplace principle upper bound by showing that for any sequence $\{h^{N},N\in\mathbb{N}\}$ with $h^{N}\in\mathcal{H}_{N}$,
\begin{eqnarray}\label{lowerbound}
\begin{split}
&   \liminf_{N\mapsto\infty}\Big\{\frac{1}{2}\mathbb{E}\Big[\frac{1}{N}\sum^{N}_{i=1}\int^{T}_{0}|h^{N}_{i}    |^{2}\mathrm{d}t\Big]+\mathbb{E}\big[F(\bar{\mu}^{N})\big]\Big\}\\
&   \geq\inf_{\Theta\in\mathcal{P}_{\infty}}\Big\{
    \frac{1}{2}\int_{\mathcal{R}_{1}}\int_{\mathbb{R}^{d_{1}}\times[0,T]}|y|^{2}r(\mathrm{d}y\times\mathrm{d}t)    \Theta_{\mathcal{R}_{1}}(\mathrm{d}r)+F(
    \Theta_{\mathcal{X}})\Big\}.
\end{split}
\end{eqnarray}

This will be done in Section~\ref{sec:lower_bound}.

\item[{\bf Step 2:}]We verify the Laplace principle lower bound in Section~\ref{sec:upper_bound}, by showing that for any $\Theta \in \mathcal{P}_{\infty}$ there exists a sequence $\{h^{N},N\in\mathbb{N}\}$ with $h^{N}\in\mathcal{H}_{N}$ such that
\begin{eqnarray}\label{superbound}
\begin{split}
&  \limsup_{N\to\infty}\Big\{\frac{1}{2}\mathbb{E}\big[\frac{1}{N}\sum^{N}_{i=1}\int^{T}_{0}
    |h^{N}_{i}|^{2}\mathrm{d}t\big]+\mathbb{E}[F(\bar{\mu}^{N})]\Big\}\\
& \leq
    \frac{1}{2}\int_{\mathcal{R}_{1}}\int_{\mathbb{R}^{d_{1}}\times[0,T]}|y|^{2}r(\mathrm{d}y\times\mathrm{d}t)    \Theta_{\mathcal{R}_{1}}(\mathrm{d}r)+F(
    \Theta_{\mathcal{X}}).
    \end{split}
\end{eqnarray}
\end{itemize}

\section{Sub-martingale Problem }\label{AC}
As a preparation for the proof of the main result, in this section, we will introduce some sub-martingale problems and provide the relations to the weak solution of RSDEs \eqref{II-eq2.7}.

To begin with, we introduce the definition of the sub-martingale problem described in \cite{SV}.
Let $\bar{b}:[0,T] \times \overline{\mathcal{D}}\to \mathbb{R}^{d}$ be a bounded measurable function and $\bar{\sigma}:[0,T] \times \overline{\mathcal{D}}\to \mathbb{R}^{d\times d_{1}}$ a bounded continuous function.

Let $\tilde{X}$ be the canonical process on $\mathcal{X}$. Set $\mathcal{F}^{0}_t=\sigma(\tilde{X}(s),\ s \leq t )$ and define operator
$$\mathcal{L}_t:=\frac{1}{2}\sum^{d}_{i,j=1}(\bar{\sigma}\bar{\sigma}^{T})_{ij}(t,x)\frac{\partial^{2}}{\partial x_{i}\partial x_{j}}+\sum^{d}_{i=1}\bar{b}_{i}(t,x)\frac{\partial}{\partial x_{i}}.$$

\begin{definition}[Sub-martingale Problem]
We say that a probability measure $\hat{\mathbb{P}}$ on $(\mathcal{X},\mathcal{B}(\mathcal{X}))$ solves a sub-martingale problem for coefficients $\bar{b}$, $\bar{\sigma}$ and $\mathbf{n}(x)$ if
$$f(t,\tilde{X}(t))-f(0,\tilde{X}(0))-\int^{t}_{0}(f_{s}+\mathcal{L}_sf)(s,\tilde{X}(s))\mathrm{d}s$$
is a $\hat{\mathbb{P}}$-sub-martingale with respect to the canonical filtration $(\mathcal{F}^{0}_t)$ for any $f\in C^{1,2}_{0}([0,T]\times\overline{\mathcal{D}})$ satisfying
\begin{equation*}
\langle \nabla_{x}f(t,x),\mathbf{n}(x)\rangle \leq 0 \quad\mbox{on}\quad [0,T]\times\partial \mathcal{D}.
\end{equation*}
\end{definition}

Consider the following RSDEs
\begin{align}\label{III-eq3.3}
\left\{
\begin{aligned}
&   \mathrm{d}Y(t)=\bar{b}(t,Y(t))\mathrm{d}t+\bar{\sigma}(t,Y(t))
    \mathrm{d}W(t)-\mathrm{d}L(t),\\
&   |L(t)|=\int^{t}_{0}\mathbf{1}_{\partial         \mathcal{D}}
    (Y(s))\mathrm{d}|L|(s),\quad L(t)=\int^{t}_{0}\mathbf{n}(Y(s))\mathrm{d}|L|(s),\\
&   Y(t)\in\overline{\mathcal{D}}, \quad t\in[0,T],
\end{aligned}
\right.
\end{align}
where $W$ is a $d_{1}$-dimensional $\mathcal{F}_t$-adapted Wiener process.

For any $f\in C^{1,2}([0,T]\times\overline{\mathcal{D}})$, we define a real-valued process $(M_{f}(t))_{t\in[0,T]}$ on probability space $(\mathcal{X},\mathcal{B}(\mathcal{X}),\hat{\mathbb{P}})$ by:
\begin{eqnarray*}
\begin{split}
 M_{f}(t,\tilde{X}) &:= f(t,\tilde{X}(t))-f(0,\tilde{X}(0))-\int^{t}_{0}(f_{s}+\mathcal{L}_sf)(s,\tilde{X}(s))\mathrm{d}s.\\
\end{split}
\end{eqnarray*}
The next result provides the relationship between a sub-martingale problem and the weak solutions of RSDEs \eqref{III-eq3.3}.
\begin{lemma}\label{Lemma 3.3}
Assume that the measure $\hat{\mathbb{P}}\in\mathcal{P}(\mathcal{X})$, then $\hat{\mathbb{P}}$ is a weak solution of equation \eqref{III-eq3.3} if and only if $\hat{\mathbb{P}}$
solves a sub-martingale problem for coefficients $\bar{b}$, $\bar{\sigma}$ and $\mathbf{n}$, or equivalently, $M_{f}$ is a $\hat{\mathbb{P}}$-sub-martingale  with respect to the canonical filtration $(\mathcal{F}^{0}_t)$ for any $f\in C^{1,2}([0,T]\times {\overline{\mathcal{D}}})$ satisfying
$\langle \nabla_{x}f(t,x),\mathbf{n}(x)\rangle\leq 0$ on $[0,T]\times\partial{\mathcal{D}}$.
\end{lemma}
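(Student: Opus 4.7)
My plan is to prove the two directions separately, in the spirit of the Stroock--Varadhan treatment of reflecting diffusions, and then to reconcile the $C_0^{1,2}$ and $C^{1,2}$ formulations via compactness of $\overline{\mathcal{D}}$.

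\emph{Weak solution implies the sub-martingale problem.} Suppose $\hat{\mathbb{P}}$ is the law on $\mathcal{X}$ of a solution $(Y,W,L)$ to equation \eqref{III-eq3.3}. Fix $f\in C^{1,2}([0,T]\times\overline{\mathcal{D}})$ satisfying $\langle \nabla_x f,\mathbf{n}\rangle\leq 0$ on $[0,T]\times\partial\mathcal{D}$. Applying It\^o's formula and subtracting the drift-generator part yields
\begin{equation*}
M_f(t,Y) = \int_0^t \langle \nabla_x f(s,Y(s)),\bar{\sigma}(s,Y(s))\,\mathrm{d}W(s)\rangle - \int_0^t \langle \nabla_x f(s,Y(s)),\mathbf{n}(Y(s))\rangle\,\mathrm{d}|L|(s).
\end{equation*}
Boundedness of $\bar{\sigma}$ on $[0,T]\times\overline{\mathcal{D}}$ and boundedness of $\nabla_x f$ make the stochastic integral a true martingale. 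The remaining term is non-decreasing because $\mathrm{d}|L|$ is supported on $\{s:Y(s)\in\partial\mathcal{D}\}$ and the integrand is non-positive there. Hence $M_f$ is a $\hat{\mathbb{P}}$-sub-martingale.

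\emph{Sub-martingale problem implies a weak solution.} This is the harder direction and is where I expect the real work. Following Stroock--Varadhan, I would first observe that any $f$ with $\langle \nabla_x f,\mathbf{n}\rangle = 0$ on $[0,T]\times \partial\mathcal{D}$ may be fed as both $f$ and $-f$ into the sub-martingale hypothesis, so that $M_f$ is a genuine $\hat{\mathbb{P}}$-martingale for such $f$. Testing against coordinate-type functions $f(x)=x_i$ and $f(x)=x_ix_j$ (all of class $C^{1,2}(\overline{\mathcal{D}})$ since $\overline{\mathcal{D}}$ is compact), one identifies the continuous $d$-dimensional process
\begin{equation*}
\tilde{M}(t):=\tilde{X}(t)-\tilde{X}(0)-\int_0^t \bar{b}(s,\tilde{X}(s))\,\mathrm{d}s + L(t)
\end{equation*}
as a continuous local martingale with quadratic variation $\int_0^t \bar\sigma\bar\sigma^T(s,\tilde{X}(s))\,\mathrm{d}s$, where $L$ is the bounded variation process produced by the Doob--Meyer decomposition applied to $M_f$ for coordinate functions with a one-sided boundary condition. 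Martingale representation, after an enlargement of the probability space, supplies a $d_1$-dimensional Wiener process $W$ such that the stochastic differential equation in \eqref{III-eq3.3} holds.

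The principal obstacle is identifying $L$ as a genuine Skorokhod reflection term, i.e.\ $L(t)=\int_0^t \mathbf{n}(\tilde X(s))\,\mathrm{d}|L|(s)$ with $|L|$ supported on $\{\tilde{X}(s)\in\partial\mathcal{D}\}$. Here the one-sided condition $\langle \nabla_x f,\mathbf{n}\rangle\leq 0$ combined with convexity and smoothness of $\mathcal{D}$ (Assumption \ref{Ass2.1}(A1)) is precisely what forces the increment of the Doob--Meyer process to lie along the outward normal at the boundary; a localisation argument using test functions supported near interior points shows that the interior contribution vanishes, while test functions of the form $f(x)=\varphi(x)\langle \mathbf{n}(x_0),x-x_0\rangle$ near a boundary point $x_0$ pin down the direction of $L$. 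Finally, the equivalence between the sub-martingale property over $C_0^{1,2}$ and over $C^{1,2}$ stated in the lemma is essentially free: since $\overline{\mathcal{D}}$ is compact, every $f\in C^{1,2}([0,T]\times\overline{\mathcal{D}})$ together with its time and space derivatives is automatically bounded, and smooth cut-off or extension across $\partial\mathcal{D}$ preserves the inequality $\langle \nabla_x f,\mathbf{n}\rangle\leq 0$ up to an error that vanishes in the limit.
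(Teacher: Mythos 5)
Your forward direction is essentially the paper's ``$\Rightarrow$'' argument: apply It\^o's formula, observe the stochastic integral is a true martingale by boundedness of $\bar\sigma$ and $\nabla_x f$ on the compact set $[0,T]\times\overline{\mathcal{D}}$, and note that the boundary term is non-decreasing because $\mathrm{d}|L|$ lives on $\{Y(s)\in\partial\mathcal{D}\}$ where the integrand is $\leq 0$. That part is fine.

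The backward direction is where your plan diverges from the paper and where it develops a genuine gap. The paper does not try to \emph{construct} the reflecting local time $L$: it cites Theorem~2.4 of Stroock--Varadhan \cite{SV} outright, which asserts the existence of a unique continuous, non-decreasing, adapted $L$ with $L(0)=0$, $L(t)=\int_0^t \mathbf 1_{\partial\mathcal D}(\tilde X(s))\,\mathrm dL(s)$, and such that
\[
\tilde M_f(t):=M_f(t)+\int_0^t\langle\nabla_x f(s,\tilde X(s)),\mathbf n(\tilde X(s))\rangle\,\mathrm dL(s)
\]
is a $\hat{\mathbb P}$-martingale \emph{for every} $f\in C^{1,2}([0,T]\times\overline{\mathcal D})$, with no boundary condition on $f$. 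Once $L$ is in hand, the coordinate functions $x_i$ and $x_ix_j$ are legitimate test functions for $\tilde M_f$, and comparing with It\^o's formula gives the quadratic covariation $\langle\tilde M_{f^i},\tilde M_{f^j}\rangle(t)=\int_0^t(\bar\sigma\bar\sigma^T)_{ij}\,\mathrm ds$. The martingale representation theorem of Ikeda--Watanabe (Theorem~II.7.1') then furnishes the Brownian motion $\tilde W$, and $(\tilde X,\tilde W,L)$ solves \eqref{III-eq3.3}.

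Your proposal instead tries to manufacture $L$ via a Doob--Meyer decomposition applied to $M_f$ for coordinate functions, but this step does not go through as written. The coordinate function $f(x)=x_i$ has $\nabla_x f=e_i$, and $\langle e_i,\mathbf n(x)\rangle$ has no fixed sign on $\partial\mathcal D$, so $f$ does not satisfy the one-sided boundary condition; the sub-martingale hypothesis gives you nothing about $M_{x_i}$, and neither $M_{x_i}$ nor $M_{-x_i}$ is known to be a sub-martingale, so Doob--Meyer cannot be invoked on them. Your preliminary observation that $M_f$ is a true martingale whenever $\langle\nabla_x f,\mathbf n\rangle\equiv 0$ on the boundary is correct, but it does not help: the coordinate functions are not in that class either. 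The remainder of your sketch (localisation near interior points, tilted test functions $\varphi(x)\langle\mathbf n(x_0),x-x_0\rangle$ to pin down the direction of $L$) is in fact the substance of the proof of \cite[Theorem~2.4]{SV}, so you would be reproving that theorem rather than using it; that is a far heavier lift than the lemma itself, and the sketch as given does not close the gap. The final remark about the $C_0^{1,2}$ versus $C^{1,2}$ formulations is harmless but unnecessary: since $\overline{\mathcal D}$ is compact, every $C^{1,2}$ function on $[0,T]\times\overline{\mathcal D}$ is automatically bounded with bounded derivatives, so the two spaces coincide here.
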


\begin{proof}
The ``$\Rightarrow$'' part is obvious by It\^o's formula.

Now we show the ``$\Leftarrow$'' part. Assume $\hat{\mathbb{P}}$ is a solution to the sub-martingale problem. By Theorem 2.4 in \cite{SV}, we know that there exists a unique, continuous, non-decreasing, adapted function $L:[0,T]\times \mathcal{X}\mapsto[0,\infty)$ such that $L(0)=0$,
$L(t)=\int^{t}_{0}\mathbf{1}_{\partial\mathcal{D}}(\tilde{X}(s))\mathrm{d}L(s)$ and for any $f\in C^{1,2}([0,T]\times{\overline{\mathcal{D}}})$
\begin{eqnarray*}\label{III-eq3.5}
\tilde{M}_{f}(t,\tilde{X}):=M_{f}(t,\tilde{X})+\int^{t}_{0}\langle\nabla_{x}f(s,\tilde{X}(s)),\mathbf{n}(\tilde{X}(s))\rangle\mathrm{d}L(s)
\end{eqnarray*}
is  a $\hat{\mathbb{P}}$-martingale.

In particular, choose $f^i(t,x) = x_{i}$ to obtain that
\begin{eqnarray*}
\begin{split}
\tilde{M}_{f^i}(t,\tilde{X})&:=\tilde{X}_{i}(t)-\tilde{X}_{i}(0)-\int^{t} _0\bar{b}_{i}(s,\tilde{X}(s))\mathrm{d}s\\
& \ \ \ \ +\int^{t}_{0}n_i(\tilde{X}(s))\mathrm{d}L(s)\\
\end{split}
\end{eqnarray*}
is a $\hat{\mathbb{P}}$-martingale. \\
Similarly, for each $i,j \in \{1,2,\ldots,d\}$, letting $f^{i,j}(t,x)=x_{i}x_{j}$, we see that
\begin{eqnarray}\label{III-eq3.9}
\begin{split}
 \tilde{M}_{f^{i,j}}(t,\tilde{X})&:=\tilde{X}_{i}(t)\tilde{X}_{j}(t)-\tilde{X}_{i}(0)\tilde{X}_{j}(0)-\int^{t}_{0}\bar{b}_{i}(s,\tilde{X}(s))\tilde{X}_{j}(s)\mathrm{d}s\\
 & \ \ \ \ -\int^{t}_{0}\bar{b}_{j}(s,\tilde{X}(s))\tilde{X}_{i}(s)\mathrm{d}s+\int^{t}_{0}\tilde{X}_{i}(s)n_{j}(\tilde{X}(s))\mathrm{d}L(s)\\
 & \ \ \ \ +\int^{t}_{0}\tilde{X}_{j}(s)n_{i}(\tilde{X}(s))\mathrm{d}L(s)-\int^{t}_{0}(\bar{\sigma}\bar{\sigma}^{T})_{ij}(s,\tilde{X}(s))\mathrm{d}s
\end{split}
\end{eqnarray}
is also a $\hat{\mathbb{P}}$-martingale. Applying It\^o's formula to $\tilde{X}_{i}(t)\tilde{X}_{j}(t)$ and comparing with \eqref{III-eq3.9}, we deduce that
\begin{equation*}
    \langle \tilde{M}_{f^i},\tilde{M}_{f^j}\rangle(t)=\int^{t}_{0}\sum^{d_{1}}_{k=1}\bar{\sigma}_{ik}\bar{\sigma}_{kj}(s,\tilde{X}(s))\mathrm{d}s.
\end{equation*}
Then, according to Theorem II.7.1' in \cite{IkedaWa}, there exists a $d_{1}$-dimensional $(\tilde{\mathcal{F}}_t)$-Wiener process $\tilde{W}=(\tilde{W}(t))$ on an extension of $(\tilde{\Omega},\tilde{\mathcal{F}},(\tilde{\mathcal{F}}_t),\tilde{\mathbb{P}})$ such that
\begin{equation*}
    \tilde{M}_{f^i}(t)=\sum^{d_{1}}_{i=1}\int^{t}_{0}\bar{\sigma}_{ik}(s,\tilde{X}(s))\mathrm{d}\tilde{W}^{k}(s),\quad i=1,2,\ldots, d.
\end{equation*}
Therefore, $(\tilde{X},\tilde{W},L)$ is a weak solution to the reflected stochastic differential equation \eqref{III-eq3.3}.
\end{proof}

In the remaining part of this section, we will show that the measure $\Theta\in\mathcal{P}(\mathcal{Z})$ is a weak solution of equation \eqref{II-eq2.7} if and only if it solves a sub-martingale problem.

\vskip 0.4cm

Recall that $\mathcal{Z}=\mathcal{X}\times\mathcal{R}_{1}\times \mathcal{W}$ and $\nu_{\Theta} $ is given by \eqref{II-eq2.6}. Let $(\bar{X},\rho,W)$ be the canonical process on $\mathcal{Z}$, namely
\begin{equation*}\label{X}
  \bar{X}(t,(\phi,r,w)):=\phi(t),\ \rho(t,(\phi,r,w)):=r_{|\mathcal{B}(\mathbb{R}^{d_{1}}\times[0,t])},\ W(t,(\phi,r,w)):=w(t),
\end{equation*}
and $(\mathcal{G}_{t})$ the canonical filtration in  $\mathcal{B}(\mathcal{Z})$ defined as
\[\mathcal{G}_{t}:=\sigma((\bar{X}(s),\rho(s),W(s)):0\leq s\leq t),\quad t\in[0,T].\]
Let $\Theta\in\mathcal{P}(\mathcal{Z})$. Given $f\in C^{1,2,2}_{0}([0,T]\times{\overline{\mathcal{D}}}\times\mathbb{R}^{d_{1}})$,
we define a real-valued process $(M^{\Theta}_{f}(t))_{t\in[0,T]}$ on probability space $(\mathcal{Z},\mathcal{B}(\mathcal{Z}),\Theta)$ as
\begin{eqnarray}
M^{\Theta}_{f}(t,(\phi,r,w))
&:=&f(t,\phi(t),w(t))-f(0,\phi(0),w(0))-\int^{t}_0\frac{\partial f}{\partial s}(s,\phi(s),w(s))\dd s\nonumber\\
& &-\int^{t}_{0}\int_{\mathbb{R}^{d_{1}}}\mathcal{A}^{\Theta}_{s}(f)(s,\phi(s),y,w(s))r_{s}(\mathrm{d}y) \mathrm{d}s, \label{III-eq3.1}
\end{eqnarray}
where for any $s\in[0,T]$, $x\in {\overline{\mathcal{D}}}$, $y, z\in\mathbb{R}^{d_{1}}$, $\mathcal{A}^{\Theta}_{s}(f)$ is defined as
\begin{eqnarray}\label{III-eq3.2}
\begin{split}
    \mathcal{A}^{\Theta}_{s}(f)(s,x,y,z)
&:=  \langle b(s,x,\nu_{\Theta}(s))+\sigma(s,x,\nu_{\Theta}(s))y,
    \nabla_{x}f(s,x,z)\rangle\\
&\quad +  \frac{1}{2}\sum^{d}_{i,j=1}(\sigma\sigma^{T})_{ij}(s,x,\nu_{\Theta}(s))\frac{\partial^{2}f}{\partial {x_{i}}\partial {x_{j}}}(s,x,z)\\
&\quad +  \sum^{d}_{i=1}\sum^{d_1}_{j=1}\sigma_{ij}(s,x,\nu_{\Theta}(s))\frac{\partial^{2}f}{\partial {x_{i}}\partial {z_{j}}}(s,x,z)\\
&\quad +\frac{1}{2}\sum^{d_1}_{i=1}\frac{\partial^{2}f}{\partial {z_{i}}\partial {z_{i}}}(s,x,z).
\end{split}
\end{eqnarray}

The last result of this section is stated as follows.
\begin{theorem}\label{Thm 3.1}
Let measure $\Theta\in\mathcal{P}(\mathcal{Z})$
satisfy $\Theta(\{(\phi,r,w)\in\mathcal{Z}:w(0)=0\})=1$.
Then $\Theta$ is a weak solution of equation \eqref{II-eq2.7} if and only if 
$M^{\Theta}_{f}$ is a sub-martingale under $\Theta$ with respect to the canonical filtration $(\mathcal{G}_{t})$ for all $f\in C^{1,2,2}_{0}([0,T]\times{\overline{\mathcal{D}}}\times \mathbb{R}^{d_{1}})$ with $\langle \nabla_{x}f(t,x,z),\mathbf{n}(x)\rangle\leq 0$ on $[0,T]\times\partial{\mathcal{D}}\times\mathbb{R}^{d_{1}}$.
\end{theorem}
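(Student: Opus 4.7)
For the direction ``$\Rightarrow$'', I would pick any weak solution $(\bar X,\rho,W,\bar K)$ of \eqref{II-eq2.7} on a filtered space and apply It\^o's formula to $f(t,\bar X(t),W(t))$ for $f\in C^{1,2,2}_{0}([0,T]\times\overline{\mathcal{D}}\times\mathbb{R}^{d_1})$. Because $\rho_s(\mathrm{d}y)\,\mathrm{d}s$ and the increment $\mathrm{d}W(t)$ play the roles analogous to the drift and noise in the definition \eqref{III-eq3.2} of $\mathcal A^{\Theta}_s$, the bounded variation part of $f(t,\bar X(t),W(t))$ collapses exactly to
\[
\int_0^t\Big(\partial_s f(s,\bar X(s),W(s))+\int_{\mathbb{R}^{d_1}}\mathcal A^{\Theta}_s(f)(s,\bar X(s),y,W(s))\rho_s(\mathrm{d}y)\Big)\mathrm{d}s,
\]
matching \eqref{III-eq3.1}. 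The local martingale part is a genuine $(\mathcal G_t)$-martingale by boundedness of $\sigma$ and of $\nabla f$, while the reflection contributes $-\int_0^t\langle\nabla_x f(s,\bar X(s),W(s)),\mathbf n(\bar X(s))\rangle\,\mathrm{d}|\bar K|(s)$, which is non-decreasing since $\mathrm{d}|\bar K|$ lives on $\{\bar X\in\partial\mathcal{D}\}$ and $\langle\nabla_x f,\mathbf n\rangle\leq 0$ there. Hence $M^{\Theta}_f$ is a sub-martingale.

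For ``$\Leftarrow$'', I adapt the strategy of Lemma~\ref{Lemma 3.3}. Given $\Theta$ for which $M^{\Theta}_f$ is a sub-martingale for all admissible $f$, invoke (a version of) Theorem~2.4 of \cite{SV} to obtain a unique continuous, non-decreasing, $(\mathcal G_t)$-adapted process $\bar L$ with $\bar L(0)=0$ and $\bar L(t)=\int_0^t\mathbf 1_{\partial\mathcal{D}}(\bar X(s))\,\mathrm{d}\bar L(s)$, such that
\[
\tilde M^{\Theta}_f(t):=M^{\Theta}_f(t)+\int_0^t\langle\nabla_x f(s,\bar X(s),W(s)),\mathbf n(\bar X(s))\rangle\,\mathrm{d}\bar L(s)
\]
is a $\Theta$-martingale for every $f\in C^{1,2,2}([0,T]\times\overline{\mathcal{D}}\times\mathbb{R}^{d_1})$, without the sign restriction on $\nabla_x f\cdot\mathbf n$. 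The passage from $C^{1,2,2}_0$ to general $C^{1,2,2}$ (and in particular polynomial test functions in $z$) proceeds by multiplying by smooth cutoffs $\chi_n(z)$ supported in $\{|z|\leq n+1\}$ and using stopping times $\tau_n=\inf\{t\in[0,T]:|W(t)|\geq n\}$, together with the boundedness of $b$ and $\sigma$.

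Identification is then carried out by specialising $f$. The choices $f=z_i$ and $f=z_iz_j$ (after cutoff) show that each $W^i$ and each $W^iW^j-\delta_{ij}t$ is a $(\mathcal G_t)$-martingale, so L\'evy's criterion makes $W$ a $d_1$-dimensional $(\mathcal G_t)$-Brownian motion under $\Theta$. The choice $f=x_i$ yields that
\[
N^i(t):=\bar X_i(t)-\bar X_i(0)-\int_0^t\!\Big(b_i+\!\int_{\mathbb{R}^{d_1}}(\sigma y)_i\rho_s(\mathrm{d}y)\Big)(s,\bar X(s),\nu_\Theta(s))\,\mathrm{d}s+\int_0^t n_i(\bar X(s))\,\mathrm{d}\bar L(s)
\]
is a $(\mathcal G_t)$-martingale. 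The choices $f=x_ix_j$ and $f=x_iz_j$, combined with the product rule applied to $\bar X_i\bar X_j$ and $\bar X_i W^j$, give $\langle N^i,N^j\rangle(t)=\int_0^t(\sigma\sigma^T)_{ij}(s,\bar X(s),\nu_\Theta(s))\,\mathrm{d}s$ and $\langle N^i,W^j\rangle(t)=\int_0^t\sigma_{ij}(s,\bar X(s),\nu_\Theta(s))\,\mathrm{d}s$. A direct covariation computation then shows that $N^i-\sum_k\int_0^{\cdot}\sigma_{ik}(s,\bar X(s),\nu_\Theta(s))\,\mathrm{d}W^k\equiv 0$. Setting $\bar K(t):=\int_0^t\mathbf n(\bar X(s))\,\mathrm{d}\bar L(s)$, whence $|\bar K|=\bar L$, recovers the system \eqref{II-eq2.7}, while $\nu_\Theta(t)=\mathrm{Law}(\bar X(t))$ is immediate from \eqref{II-eq2.6}. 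I expect the principal obstacle to be precisely this martingale-identification step: one must justify a form of the Stroock--Varadhan reflected sub-martingale theorem that accommodates measurable drifts arising from the random relaxed control $\rho_s(\mathrm{d}y)\,\mathrm{d}s$, and verify that $W$ is Brownian with respect to the canonical filtration $(\mathcal G_t)$ itself and not merely some enlargement, which is what lets the covariation calculus above be performed directly on $(\mathcal Z,\mathcal B(\mathcal Z),\Theta)$.
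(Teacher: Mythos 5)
Your proposal is correct and follows essentially the same route as the paper: It\^o's formula for the forward direction, and for the converse a Stroock--Varadhan-type construction of the boundary local time (the paper's Lemmas \ref{submartingale1}--\ref{submartingale2}), followed by the same polynomial test functions $x_i$, $x_ix_j$, $z_i$, $z_iz_j$, $x_iz_j$ (after cutoff) and the covariation/martingale-representation argument of Ikeda--Watanabe to identify the driving Wiener process and the stochastic integral. The only detail worth noting is that the localization must also control the relaxed-control drift $\int_{\mathbb{R}^{d_1}}|y|\,\rho_s(\mathrm{d}y)$ (the paper stops at $\tau_M=\inf\{t:\int_{\mathbb{R}^{d_1}\times[0,t]}|y|\,r(\mathrm{d}y\times\mathrm{d}s)\geq M\}$ rather than only at $|W(t)|\geq n$), since this term is not bounded even though $b$ and $\sigma$ are.
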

Before proving Theorem \ref{Thm 3.1}, we state the following Lemmas whose proofs are similar to those of Lemma 2.2 and Theorem 2.4 in \cite{SV}.
\begin{lemma}\label{submartingale1}
Suppose that $M^{\Theta}_{f}$ is a $\Theta$-sub-martingale for any $f\in C^{1,2,2}_{0}([0,T]\times {\overline{\mathcal{D}}} \times \mathbb{R}^{d_{1}})$ satisfying
$\langle \nabla_{x}f(t,x,z),\mathbf{n}(x) \rangle \leq 0 $ on $[0,T] \times \partial{\mathcal{D}} \times \mathbb{R}^{d_{1}} $. Then, for each $f\in C^{1,2,2}_{b}([0,T]\times{\overline{\mathcal{D}}} \times \mathbb{R}^{d_{1}})$ with $\langle \nabla_{x}f(t,x,z),\mathbf{n}(x) \rangle \leq 0$ on $[0,T] \times \partial{\mathcal{D}} \times \mathbb{R}^{d_{1}}$, $M^{\Theta}_{f}(t)$ is a $\Theta$-local-sub-martingale.
\end{lemma}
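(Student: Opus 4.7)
The only gap between the hypothesis (sub-martingale property for $f \in C^{1,2,2}_{0}$) and the conclusion (the same, but for $f \in C^{1,2,2}_{b}$) lies in the behavior at infinity in the $z$-variable: by Assumption~\ref{Ass2.1}(A1) the domain $\overline{\mathcal{D}}$ is bounded and $[0,T]$ is compact, so the non-compactness of the support of $f$ can only come from $\mathbb{R}^{d_1}$. The natural strategy is therefore to truncate $f$ in $z$, apply the hypothesis to the resulting compactly supported function, and recover the original statement by introducing stopping times that control the canonical Wiener coordinate~$W$.

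\textbf{Cut-off and reduction.} Fix a smooth function $\chi:\mathbb{R}^{d_1}\to[0,1]$ with $\chi\equiv 1$ on $\{|z|\le 1\}$ and $\chi\equiv 0$ on $\{|z|\ge 2\}$, and set $\chi_n(z):=\chi(z/n)$. Define
\[
f_n(t,x,z) := f(t,x,z)\,\chi_n(z).
\]
Then $f_n\in C^{1,2,2}_{0}([0,T]\times\overline{\mathcal{D}}\times\mathbb{R}^{d_1})$, its support being contained in the compact set $[0,T]\times\overline{\mathcal{D}}\times\{|z|\le 2n\}$. Because the cut-off depends only on $z$, one has $\nabla_x f_n=\chi_n\nabla_x f$, and since $\chi_n\ge 0$,
\[
\langle\nabla_x f_n(t,x,z),\mathbf{n}(x)\rangle=\chi_n(z)\langle\nabla_x f(t,x,z),\mathbf{n}(x)\rangle\le 0 \quad \text{on } [0,T]\times\partial\mathcal{D}\times\mathbb{R}^{d_1}.
\]
The hypothesis of the lemma therefore gives that $M^{\Theta}_{f_n}$ is a $\Theta$-sub-martingale with respect to $(\mathcal{G}_t)$.

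\textbf{Localization.} Define the $(\mathcal{G}_t)$-stopping times
\[
\tau_n(\phi,r,w):=\inf\{t\in[0,T]:|w(t)|\ge n\}\wedge T,
\]
with $\inf\emptyset:=T$; these are well-defined stopping times by continuity of the canonical process $W$. By choice of $\chi$, on the open set $\{|z|<n\}$ we have $\chi_n\equiv 1$ and all first and second partial derivatives of $\chi_n$ vanish. Consequently, for $s<\tau_n$ the integrand of $M^{\Theta}_{f_n}$ coincides with that of $M^{\Theta}_{f}$ (examining each term in \eqref{III-eq3.1}--\eqref{III-eq3.2}), and a continuity argument at $s=\tau_n$ extends this to the closed interval, yielding
\[
M^{\Theta}_{f_n}(t\wedge\tau_n)=M^{\Theta}_{f}(t\wedge\tau_n),\qquad \Theta\text{-a.s.},\ t\in[0,T].
\]
Optional sampling applied to the $\Theta$-sub-martingale $M^{\Theta}_{f_n}$ and the bounded stopping time $\tau_n$ shows that $M^{\Theta}_{f_n}(\cdot\wedge\tau_n)$ is a $\Theta$-sub-martingale, hence so is $M^{\Theta}_{f}(\cdot\wedge\tau_n)$. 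Since $w\in\mathcal{W}=C([0,T];\mathbb{R}^{d_1})$ is $\Theta$-a.s.\ continuous on $[0,T]$, one has $\sup_{t\in[0,T]}|w(t)|<\infty$ $\Theta$-a.s., so $\tau_n\nearrow T$ $\Theta$-almost surely. Thus $\{\tau_n\}$ is a localizing sequence and $M^{\Theta}_{f}$ is a $\Theta$-local-sub-martingale.

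\textbf{Main obstacle.} The delicate point is the pathwise identification $M^{\Theta}_{f_n}(\cdot\wedge\tau_n)=M^{\Theta}_{f}(\cdot\wedge\tau_n)$: one must verify not merely that $f_n=f$ on the relevant event, but that every derivative of $\chi_n$ appearing in $\mathcal{A}^{\Theta}_s(f_n)$ vanishes along the path $w(s)$ for $s<\tau_n$. This is ensured by choosing $\chi$ so that it is constant equal to $1$ on a full open neighborhood of $\{|z|\le 1\}$; all the other ingredients (integrability for optional sampling, validity of the boundary inequality) are inherited from the boundedness of $b,\sigma$ (Assumption~\ref{Ass2.1}(A3)) and from the compact support and preserved sign of $\chi_n$.
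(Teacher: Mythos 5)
Your proof is correct, and it reaches the conclusion by a genuinely different localization than the paper's. You and the paper both begin with the same reduction (multiply $f$ by a cutoff in the $z$-variable, check that the boundary inequality $\langle\nabla_x f_n,\mathbf{n}\rangle\le 0$ survives because the cutoff is nonnegative and depends only on $z$, and invoke the hypothesis for $f_n\in C^{1,2,2}_0$). The divergence is in the second half: the paper keeps the truncation level $n$ and the localization level decoupled, stops at $\tau_M=\inf\{t:\int_{\mathbb{R}^{d_1}\times[0,t]}|y|\,r(\dd y\times\dd s)\ge M\}$ (so that the drift integral in $M^\Theta_{f_n}(\cdot\wedge\tau_M)$ is uniformly bounded), and then lets $n\to\infty$ for fixed $M$, concluding by bounded convergence — which is why it insists that the derivatives of the cutoff $\eta_n$ be bounded uniformly in $n$. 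You instead tie the truncation radius to the exit time $\tau_n$ of the Wiener coordinate, obtaining the exact pathwise identity $M^\Theta_{f_n}(\cdot\wedge\tau_n)=M^\Theta_f(\cdot\wedge\tau_n)$ (valid since $\nabla_xf_n$, $\partial^2_{x_ix_j}f_n$, $\partial^2_{x_iz_j}f_n$, $\partial^2_{z_iz_i}f_n$ all coincide with those of $f$ on $\{|z|\le n\}$, and the $y$-variable enters $\mathcal{A}^\Theta_s$ only linearly, never through the truncated argument), so no limiting argument is needed and no uniformity of the cutoff's derivatives is required. Both localizing sequences are legitimate $(\mathcal{G}_t)$-stopping times increasing to $T$ almost surely, so either yields the stated local-sub-martingale property. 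The only trade-off worth noting is that the paper's control-based $\tau_M$ makes the stopped process uniformly \emph{bounded} (not merely integrable), which is the form implicitly exploited in the Stroock--Varadhan-type construction of Lemma~\ref{submartingale2}; your Wiener-based stopping gives integrability of the stopped process (via optional sampling applied to $M^\Theta_{f_n}$) but not boundedness, which is nevertheless all that the present statement requires.
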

\begin{proof}
Assume that $f\in C^{1,2,2}_{b}([0,T] \times {\overline{\mathcal{D}}} \times \mathbb{R}^{d_{1}})$ with
$\langle \nabla_{x}f(t,x,z), \mathbf{n}(x) \rangle \leq 0$ on $[0,T] \times \partial{\mathcal{D}} \times \mathbb{R}^{d_{1}}$. For each $n \geq 1$, choose $\eta_{n} \in C_{0}^{\infty}(\mathbb{R}^{d_1})$ such that
$0 \leq \eta_{n} \leq 1$, $\eta_{n}=1$ on $\{z \in \mathbb{R}^{d_1}: |z| \leq n\}$ and all derivatives of $\eta_{n}$ up to the second order are uniformly bounded on $n$. Let
\[f_n = \eta_n \cdot f. \]
Then $f_n \in C^{1,2,2}_{0}([0,T] \times {\overline{\mathcal{D}}} \times \mathbb{R}^{d_{1}}) $ and $\langle \nabla_{x}f_n(t,x,z),\mathbf{n}(x) \rangle \leq 0$ on $[0,T] \times \partial{\mathcal{D}} \times \mathbb{R}^{d_{1}}$. So $M^{\Theta}_{f_n}(t)$ is a $\Theta$-sub-martingale.
For each $M \in \mathbb{N}$, define a stopping time
\[\tau_{M}((\phi, r, w)):=\inf\{t \in [0,T]: \int_{\mathbb{R}^{d_1}\times [0,t]}|y|r(\mathrm{d}y \times \mathrm{d}s)\geq M \}. \]
Then $M^{\Theta}_{f_n}(t \wedge \tau_{M} )$ is a $\Theta$-sub-martingale.
Obviously, $M^{\Theta}_{f_n}(t \wedge \tau_{M} ) \rightarrow M^{\Theta}_{f}(t \wedge \tau_{M} )$ boundedly and so $M^{\Theta}_{f}(t \wedge \tau_{M} )$ is a $\Theta$-sub-martingale. It follows that $M^{\Theta}_{f}(t)$ is a $\Theta$-local-sub-martingale.
\end{proof}

\begin{lemma}\label{submartingale2}
There exists a continuous, non-decreasing and adapted function $\xi: [0,T] \times \mathcal{Z} \mapsto [0,\infty)$ such that $\xi(0)=0$,
\[\xi(t)= \int_{0}^t\mathbf{1}_{\partial {D}}(\phi(s)) \mathrm{d}\xi(s),\]
and for all
$f \in  C^{1,2,2}_{b}([0,T] \times \overline{\mathcal{D}} \times \mathbb{R}^{d_{1}})$,
\[M^{\Theta}_{f} + \int_0^t \langle\nabla_{x}f(s, \phi(s), w(s)),\mathbf{n}(\phi(s))\rangle \mathrm{d}\xi(s)\]
is a $\Theta$-local-martingale.
\end{lemma}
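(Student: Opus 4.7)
The plan is to follow the strategy of Theorem 2.4 in \cite{SV}: first build $\xi$ as the Doob--Meyer compensator associated with a canonical reference test function $\psi_0$ whose inward normal derivative at $\partial\mathcal{D}$ equals $+1$, then identify the compensator of $M^{\Theta}_f$ for an arbitrary $f \in C^{1,2,2}_b([0,T]\times\overline{\mathcal{D}}\times\mathbb{R}^{d_1})$ via a linearization that reduces $f$ to a multiple of $\psi_0$ modulo a martingale contribution, and finally deduce that $\xi$ charges only the times when $\phi$ lies on $\partial\mathcal{D}$.

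Since $\mathcal{D}$ is bounded, convex and smooth, I can choose $\psi_0 \in C^{\infty}_b(\overline{\mathcal{D}})$, viewed as a $t$- and $z$-independent element of $C^{1,2,2}_b([0,T]\times\overline{\mathcal{D}}\times\mathbb{R}^{d_1})$, with $\psi_0 \geq 0$, $\psi_0 = 0$ on $\partial\mathcal{D}$, and $\langle\nabla\psi_0(x), \mathbf{n}(x)\rangle = -1$ on $\partial\mathcal{D}$; a smoothed version of $\mathrm{dist}(x, \partial\mathcal{D})$ serves this purpose. By Lemma~\ref{submartingale1}, $M^{\Theta}_{\psi_0}$ is a bounded (hence of class (DL)) $\Theta$-local-sub-martingale. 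The Doob--Meyer decomposition then yields a unique continuous, non-decreasing, adapted process $\xi$ with $\xi(0)=0$ such that $M^{\Theta}_{\psi_0}-\xi$ is a $\Theta$-local-martingale.

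Next, for any $f \in C^{1,2,2}_b([0,T]\times\overline{\mathcal{D}}\times\mathbb{R}^{d_1})$, let $\tilde{\beta}_f \in C^{\infty}_b([0,T]\times\overline{\mathcal{D}}\times\mathbb{R}^{d_1})$ be any smooth extension of the boundary data $-\langle\nabla_x f(s,x,z), \mathbf{n}(x)\rangle$ on $[0,T]\times\partial\mathcal{D}\times\mathbb{R}^{d_1}$; such an extension exists since $\partial\mathcal{D}$ is smooth. Set $h := f - \tilde{\beta}_f\,\psi_0$. Using $\psi_0|_{\partial\mathcal{D}}=0$ and the product rule, one computes $\langle\nabla_x h, \mathbf{n}\rangle = \langle\nabla_x f, \mathbf{n}\rangle + \tilde{\beta}_f = 0$ on $\partial\mathcal{D}$, so $\pm h$ both satisfy the boundary condition of Lemma~\ref{submartingale1}, making $M^{\Theta}_h$ a $\Theta$-local-martingale. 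By linearity of $f \mapsto M^{\Theta}_f$, the problem reduces to showing that the Doob--Meyer compensator of $M^{\Theta}_{\tilde{\beta}_f\psi_0}$ equals $\int_0^{\cdot}\tilde{\beta}_f(s,\phi(s),w(s))\,d\xi(s)$. Granting this, uniqueness of Doob--Meyer applied to different extensions shows the expression is independent of the choice of $\tilde{\beta}_f$: for any two extensions their difference $\alpha$ vanishes on $\partial\mathcal{D}$, so $\int_0^t\alpha(s,\phi(s),w(s))\,d\xi(s)=0$, which by a standard density argument implies $\xi(t)=\int_0^t\mathbf{1}_{\partial\mathcal{D}}(\phi(s))\,d\xi(s)$. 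Together with $\tilde{\beta}_f|_{\partial\mathcal{D}}=-\langle\nabla_x f, \mathbf{n}\rangle$ this delivers the desired formula.

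The main obstacle is proving the claimed ``product rule'', namely that the Doob--Meyer compensator of $M^{\Theta}_{\tilde{\beta}_f\psi_0}$ equals the integral $\int_0^t\tilde{\beta}_f(s,\phi(s),w(s))\,d\xi(s)$ against $d\xi$. I plan to handle this by localizing with stopping times $\tau_M$ that control $|w(t)|$, approximating $\tilde{\beta}_f$ by simple processes that are piecewise constant in $s$ with smooth spatial dependence (for which the identity on each time sub-interval reduces, via Doob--Meyer uniqueness, to the defining property of $\xi$ from $M^{\Theta}_{\psi_0}$ combined with the martingale identity for $h$), and then passing to the limit by dominated convergence. This mirrors the approximation scheme in the proof of Theorem 2.4 of \cite{SV}, adapted to accommodate the extra $(s,w(s))$-dependence of the integrand in the present augmented canonical setting.
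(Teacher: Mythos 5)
Your overall architecture is the one the paper intends (the paper itself gives no details beyond ``minor modification of Theorem 2.4 in \cite{SV}''): build $\xi$ by Doob--Meyer from a reference function $\psi_0$ with unit inward normal derivative, kill the boundary term of a general $f$ by subtracting $\tilde\beta_f\psi_0$ so that $M^{\Theta}_{f-\tilde\beta_f\psi_0}$ is a local martingale by Lemma~\ref{submartingale1} applied to $\pm(f-\tilde\beta_f\psi_0)$, and then identify the compensator of $M^{\Theta}_{\tilde\beta_f\psi_0}$. The genuine gap sits exactly at the step you yourself flag as the main obstacle, and the scheme you propose does not close it. Approximating $\tilde\beta_f$ by functions piecewise constant in $s$ removes none of the difficulty: on each subinterval you still face the compensator of $M^{\Theta}_{g\psi_0}$ for a coefficient $g$ depending on $(x,z)$, which is precisely the original problem; it does not ``reduce via Doob--Meyer uniqueness to the defining property of $\xi$,'' since that property only covers the coefficient $g\equiv 1$. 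Moreover, you obtain the boundary-support property $\xi(t)=\int_0^t\mathbf{1}_{\partial\mathcal{D}}(\phi(s))\,\dd\xi(s)$ as a corollary of the product rule, whereas in the Stroock--Varadhan argument that property is an \emph{input} to the identification of the density; as written, your plan either is circular or leaves the central identification unproved.

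A way to close the gap within your framework is by positivity and domination rather than time discretization: for smooth $g$ with $0\le g\le c$ on $\partial\mathcal{D}$, both $g\psi_0$ and $(c-g)\psi_0$ satisfy the boundary condition of Lemma~\ref{submartingale1} (since $\psi_0=0$ and $\langle\nabla\psi_0,\mathbf{n}\rangle=-1$ there), so $M^{\Theta}_{g\psi_0}$ and $M^{\Theta}_{(c-g)\psi_0}$ are local submartingales with nondecreasing compensators summing to $c\xi$; hence $0\le A_{g\psi_0}\le c\xi$ and $\dd A_{g\psi_0}=\gamma_g\,\dd\xi$ for a density $0\le\gamma_g\le c$. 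Taking $g$ vanishing on $\partial\mathcal{D}$ gives $A_{g\psi_0}=0$, whence (approximating $\mathbf{1}_{\mathcal{D}}$ from below) $\xi$ is carried by $\{s:\phi(s)\in\partial\mathcal{D}\}$; a localized version of the same comparison, with cutoffs supported near a point of $[0,T]\times\partial\mathcal{D}\times\mathbb{R}^{d_1}$, then identifies $\gamma_g(s)=g(s,\phi(s),w(s))$ for $\dd\xi$-a.e.\ $s$. Two secondary issues: $M^{\Theta}_{\psi_0}$ is not bounded (its control term is governed only by $\int|y|\,r(\dd y\times\dd s)$), so Doob--Meyer must be applied to $M^{\Theta}_{\psi_0}(\cdot\wedge\tau_M)$ and the pieces patched together; and for $f\in C^{1,2,2}_b$ the boundary data $-\langle\nabla_xf,\mathbf{n}\rangle$ is only once differentiable in $x$, so $h=f-\tilde\beta_f\psi_0$ need not lie in $C^{1,2,2}_b$ and Lemma~\ref{submartingale1} does not apply to it directly --- you must first prove the identity for $f$ smooth in $x$ and then conclude by mollification.
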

The proof of this lemma is a minor modification of the proof of Theorem 2.4 in \cite{SV}. We omit the details.
\vskip 0.4cm
Now we come back to the proof of Theorem \ref{Thm 3.1}
\begin{proof}
The ``$\Rightarrow$'' part is obvious by It\^o's formula.

Now we show the ``$\Leftarrow$'' part. From Lemma \ref{submartingale2}, we know that there exists a continuous, non-decreasing and adapted function
$\xi:[0,T] \times \mathcal{Z} \mapsto [0,\infty)$ such that $\xi(0)=0$,
$\xi(t)= \int_0^t\mathbf{1}_{\overline{\mathcal{D}}}(\phi(s)) \mathrm{d}\xi(s)$,
and for all
$f \in  C^{1,2,2}_{b}([0,T] \times \overline{\mathcal{D}} \times \mathbb{R}^{d_{1}})$,
\begin{eqnarray*}\label{submart3}
\tilde{M}^{\Theta}_{f}(t,(\phi, r, w)):= M^{\Theta}_{f}(t,(\phi, r, w))
                                         + \int^{t}_{0}\langle\nabla_{x}f(s,\phi(s), w(s)),\mathbf{n}(\phi(s))\rangle\mathrm{d}\xi(s)
\end{eqnarray*}
is a $\Theta$-local-martingale.

In particular, for each $i \in \{1,2,\ldots,d\}$, choose $f^{i}(t,x,z)=x_{i}$ to obtain that
\begin{eqnarray*}\label{submart4}
\begin{split}
    \tilde{M}^{\Theta}_{f^{i}}(t,(\phi, r, w))
&:=  \phi_{i}(t)-\phi_{i}(0)-\int_0^t b_{i}(s,\phi(s),\nu_{\Theta}(s))\mathrm{d}s\\
&\quad -  \int_0^t \int_{\mathbb{R}^{d_1}}\sum^{d_1}_{k=1}\sigma_{ik}(s, \phi(s),\nu_{\Theta}(s) )y_{k}r_{s}(\mathrm{d}y)\mathrm{d}s\\
&\quad +  \int_0^t n_i(\phi(s))\mathrm{d}\xi(s)\\
\end{split}
\end{eqnarray*}
is a $\Theta$-local-martingale.

For each $i,j \in \{1,2,\ldots,d\}$, choosing $f^{i,j}(t,x,z)=x_{i}x_{j}$, we see that
\begin{eqnarray}\label{submart5}
\begin{split}
    \tilde{M}^{\Theta}_{f^{i,j}}(t,(\phi, r, w))
&:=  \phi_{i}(t)\phi_{j}(t)-\phi_{i}(0)\phi_{j}(0)\\
&\quad -\int_0^t b_{i}(s,\phi(s),\nu_{\Theta}(s))\phi_{j}(s)\mathrm{d}s -\int_0^t b_{j}(s,\phi(s),\nu_{\Theta}(s))\phi_{i}(s)\mathrm{d}s\\
&\quad -\int_0^t \int_{\mathbb{R}^{d_1}}\sum^{d_1}_{k=1}\sigma_{ik}(s, \phi(s),\nu_{\Theta}(s) )y_{k}\phi_{j}(s)r_{s}(\mathrm{d}y)\mathrm{d}s\\
&\quad -\int_0^t \int_{\mathbb{R}^{d_1}}\sum^{d_1}_{k=1}\sigma_{jk}(s, \phi(s),\nu_{\Theta}(s) )y_{k}\phi_{i}(s)r_{s}(\mathrm{d}y)\mathrm{d}s\\
&\quad -\int_0^t \sum^{d_{1}}_{k=1}\sigma_{ik}\sigma_{kj}(s,\phi(s),\nu_{\Theta}(s))\mathrm{d}s + \int_0^t \phi_{i}(s)n_j(\phi(s))\mathrm{d}\xi(s) \\
&\quad + \int_0^t \phi_{j}(s)n_i(\phi(s))\mathrm{d}\xi(s)\\
\end{split}
\end{eqnarray}
is also a $\Theta$-local-martingale.  Applying It\^o's formula to $\bar{X}_i(t)\bar{X}_j(t)$ and comparing with \eqref{submart5}, we deduce that
\begin{equation}\label{submart6}
\langle\tilde{M}^{\Theta}_{f^i},\tilde{M}^{\Theta}_{f^j}\rangle(t)=\int^{t}_{0}\sum^{d_{1}}_{k=1}\sigma_{ik}\sigma_{kj}(s,\phi(s),\nu_{\Theta}(s))\mathrm{d}s.
\end{equation}

For each $i \in \{1, 2, \ldots, d_{1}\}$, choose $g^{i} \in C_{0}^{1,2,2}([0,T] \times \overline{\mathcal{D}} \times \mathbb{R}^{d_{1}})$ such that
$g^{i}(t,x,z)=z_{i}$ on $[0,T] \times \overline{\mathcal{D}} \times \{z\in \mathbb{R}^{d_1}: |z| \leq n\}$ and for each $n \in \mathbb{N}$, define the stopping time $\tau_{n}= \inf\{ t \in [0,T]: |w(t)|> n\}$ to obtain that
\begin{equation*}\label{submart7}
\tilde{M}^{\Theta}_{g^{i}}(t \wedge  \tau_{n},(\phi, r, w)):= w_{i}(t\wedge \tau_{n} ) - w_{i}(0)
\end{equation*}
is a $\Theta$-martingale. It follows that $\{\tilde{M}^{\Theta}_{g^{i}}(t),t\geq 0\}$ is a $\Theta$-local-martingale.

Similarly, for each $i, j \in \{1, 2, \ldots, d_{1}\}$, choosing $g^{i,j}(t,x,z)\in C_{0}^{1,2,2}([0,T]\times\overline{\mathcal{D}}\times \mathbb{R}^{d_{1}})$
such that
$g^{i,j}(t,x,z)=z_{i}z_{j}$ on $[0,T] \times \overline{\mathcal{D}} \times \{z\in \mathbb{R}^{d_1}: |z| \leq n\}$, we have
\begin{equation}\label{submart8}
\tilde{M}^{\Theta}_{g^{i,j}}(t,(\phi, r, w)):=w_{i}(t)w_{j}(t)-w_{i}(0)w_{j}(0)-\frac{1}{2}\delta_{ij}t
\end{equation}
is a $\Theta$-local-martingale. Applying It\^o's formula to $W_{i}(t)W_{j}(t)$ and comparing with \eqref{submart8}, we deduce that
\begin{equation}\label{submart9}
\langle\tilde{M}^{\Theta}_{g^i},\tilde{M}^{\Theta}_{g^j}\rangle(t)= \langle W_{i},W_{j}\rangle(t)=\delta_{ij}t.
\end{equation}
Therefore, $W(t)$ is a $(\mathcal{G}_{t})$-Wiener process on $(\mathcal{Z},\mathcal{B}(\mathcal{Z}),(\mathcal{G}_{t}),\Theta)$.

For each $i \in \{1, 2, \ldots, d\}$, $j \in \{1, 2, \ldots, d_{1}\}$, by choosing
$h^{i,j}(t,x,z) \in C_{0}^{1,2,2}([0,T] \times\overline{\mathcal{D}} \times \mathbb{R}^{d_{1}})$
such that
$h^{i,j}(t,x,z)=x_{i}z_{j}$ on $[0,T] \times \overline{\mathcal{D}} \times \{z\in \mathbb{R}^{d_1}: |z| \leq n\}$, using a similar argument as \eqref{submart9}, we see that
\begin{equation}\label{submart10}
 \langle\tilde{M}^{\Theta}_{f^i},\tilde{M}^{\Theta}_{g^j}\rangle(t)= \int_0^t \sigma_{ij}(s, \phi(s),\nu_{\Theta}(s))\mathrm{d}s.
\end{equation}
Then, by \eqref{submart6}, \eqref{submart9} and \eqref{submart10} and using the  arguments in the proof of Theorem II.7.1' in \cite{IkedaWa}, we obtain that
\[ M^{\Theta}_{f^i}(t,(\phi, r, w))=\sum_{k=1}^{d_{1}}\int_0^t \sigma_{ik}(s,\phi(s),\nu_{\Theta}(s))\mathrm{d}W_{k}(s), i=1, 2, \ldots, d. \]
Therefore, the canonical processes $(\bar{X}, \rho, W)$ is a solution of equation \eqref{II-eq2.7}.

\end{proof}

\section{Laplace Principle}\label{sec:variational}
In this section, we will prove the upper and the lower bound of the Laplace principle.
We start by presenting an auxiliary lemma, which will be used in the proofs of the Laplace principle. Recall the controlled RSDEs \eqref{II-eq2.3},
 the empirical measure $\bar{\mu}^{N}$ and the path space  $\mathcal{X}:=C([0,T];\overline{\mathcal{D}})$.
\begin{lemma}\label{tightness}
Suppose Assumption \ref{Ass2.1} holds. Let $\{h^{N}, N \in \mathbb{N}\}$ be a sequence of elements in $\mathcal{H}_{N}$ that satisfies
\begin{equation}\label{IV-Ieq4.1}
    \sup_{N \in \mathbb{N}}\mathbb{E}\left[\frac{1}{N}\sum^{N}_{i=1}\int^{T}_{0}|h^{N}_{i}(t)|^{2}\mathrm{d}t\right]<\infty.
\end{equation}
Then the family of the laws of the $\mathcal{P}(\mathcal{X})$-valued random variables $\bar{\mu}^N$ is tight.
\end{lemma}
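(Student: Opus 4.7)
\textbf{Plan for the proof of Lemma~\ref{tightness}.}

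The plan is to first reduce tightness of $\{\bar\mu^N\}_N$ as $\mathcal{P}(\mathcal{X})$-valued random variables to tightness of the intensity measures
$$\nu^N := \mathbb{E}[\bar\mu^N] = \frac{1}{N}\sum_{i=1}^N \mbox{Law}(\bar X^{i,N}) \in \mathcal{P}(\mathcal{X}),$$
which is a standard equivalence for random probability measures on a Polish space. Since $\overline{\mathcal{D}}$ is compact by Assumption~\ref{Ass2.1}(A1), tightness of $\{\nu^N\}_N$ in $\mathcal{P}(C([0,T];\overline{\mathcal{D}}))$ reduces by an Aldous-type criterion and Markov's inequality to establishing
$$\lim_{\delta\downarrow 0}\limsup_{N\to\infty}\sup_{\tau\in\mathcal{T}^N,\,\theta\in[0,\delta]}\frac{1}{N}\sum_{i=1}^N \mathbb{E}\bigl|\bar X^{i,N}(\tau+\theta) - \bar X^{i,N}(\tau)\bigr|^2 = 0,$$
where $\mathcal{T}^N$ is the set of $(\mathcal{F}_t)$-stopping times bounded by $T$.

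The central estimate comes from applying It\^o's formula to $|\bar X^{i,N}(u) - \bar X^{i,N}(\tau)|^2$ on $[\tau, \tau+\theta]$. The only delicate contribution is the reflection term
$$\mathcal{R}_i := -2\int_\tau^{\tau+\theta}\bigl\langle \bar X^{i,N}(u) - \bar X^{i,N}(\tau),\, \mathbf{n}(\bar X^{i,N}(u))\bigr\rangle\,\dd|\bar K^{i,N}|(u).$$
Because $\dd|\bar K^{i,N}|$ is supported on $\{u: \bar X^{i,N}(u)\in\partial\mathcal{D}\}$, Remark~(R3) applied with $x=\bar X^{i,N}(u)\in\partial\mathcal{D}$ and $y=\bar X^{i,N}(\tau)\in\overline{\mathcal{D}}$ gives $\langle \mathbf{n}(\bar X^{i,N}(u)),\,\bar X^{i,N}(\tau) - \bar X^{i,N}(u)\rangle \leq 0$, hence $\mathcal{R}_i\leq 0$ and can be discarded. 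Taking expectation (the stochastic integral has mean zero by boundedness of $\sigma$ and of $\overline{\mathcal{D}}$) and using $|b|,\|\sigma\|\leq L$ from Assumption~\ref{Ass2.1}(A3) together with $|\bar X^{i,N}(u)-\bar X^{i,N}(\tau)|\leq\mathrm{diam}(\overline{\mathcal{D}})$, we obtain
$$\mathbb{E}\bigl|\bar X^{i,N}(\tau+\theta)-\bar X^{i,N}(\tau)\bigr|^2 \leq C\theta + C\,\mathbb{E}\!\int_\tau^{\tau+\theta}|h^N_i(u)|\,\dd u$$
for a constant $C$ independent of $N$, $i$, $\tau$, $\theta$.

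An application of Cauchy--Schwarz to the control integral, followed by averaging over $i$ and Jensen's inequality combined with the hypothesis \eqref{IV-Ieq4.1}, yields
$$\frac{1}{N}\sum_{i=1}^N \mathbb{E}\bigl|\bar X^{i,N}(\tau+\theta)-\bar X^{i,N}(\tau)\bigr|^2 \leq C\theta + C'\sqrt{\theta}$$
uniformly in $N$, $\tau\in\mathcal{T}^N$ and $\theta\in[0,\delta]$. Together with the continuity of all sample paths this yields the Aldous condition, hence tightness of $\{\nu^N\}_N$ and in turn of $\{\bar\mu^N\}_N$.

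The main obstacle is that the total variation $|\bar K^{i,N}|(T)$ of each reflection term depends on the control $h^N_i$, whose $L^2$-norm is \emph{not} individually bounded -- only the average $\frac{1}{N}\sum_i\mathbb{E}\|h^N_i\|_{L^2(0,T)}^2$ is controlled by \eqref{IV-Ieq4.1}. Convexity of $\mathcal{D}$ is precisely the feature that circumvents this difficulty: choosing the test function $y\mapsto|y-\bar X^{i,N}(\tau)|^2$ makes the reflection contribution non-positive via Remark~(R3) and so it is simply dropped, with no separate moment bound on $|\bar K^{i,N}|(T)$ ever needed.
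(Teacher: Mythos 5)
Your proposal is correct, and it takes a genuinely different route from the paper's proof, though both hinge on the same key observation. You reduce tightness of the $\mathcal{P}(\mathcal{X})$-valued random variables $\bar\mu^N$ to tightness of their intensity measures $\nu^N = \mathbb{E}[\bar\mu^N]$ (the standard Sznitman-type equivalence for random probability measures), and then verify the Aldous criterion for $\nu^N$ via a uniform $L^2$ modulus-of-continuity estimate over stopping times, together with Markov's inequality; the paper instead works directly with $\bar\mu^N$ by introducing the H\"older seminorm functional $G_\alpha(f) = \sup_{s<t}|f(t)-f(s)|/|t-s|^\alpha$, showing that the level sets $\{\mu: \mu(G_\alpha)\leq M\}$ are relatively compact in $\mathcal{P}(\mathcal{X})$, and bounding $\mathbb{E}[\bar\mu^N(G_\alpha)]$ uniformly in $N$. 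To control the stochastic integral in the H\"older norm the paper invokes the Garsia--Rodemich--Rumsey lemma with exponents $p=4$, $\alpha=1/8$, whereas your Aldous argument only needs the Burkholder-free observation that the stochastic integral has mean zero after optional stopping, making your route technically lighter. Both proofs exploit convexity of $\mathcal{D}$ (Remark (R3)) in exactly the same way to conclude that the reflection contribution in the It\^o expansion of $|\bar X^{i,N}(u)-\bar X^{i,N}(s)|^2$ is non-positive and can simply be dropped; this is precisely what circumvents the lack of any individual moment bound on $|\bar K^{i,N}|(T)$, as you correctly emphasize. One point you gloss over: the Aldous criterion as usually stated delivers tightness in the Skorokhod space $D([0,T];\overline{\mathcal{D}})$, so an extra (standard, but worth a sentence) remark is needed that since all $\bar X^{i,N}$ have continuous paths and $C$ is closed in $D$, any Skorokhod limit is supported on $C$ and the convergence upgrades to the uniform topology. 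Also, to apply Aldous to the auxiliary process with law $\nu^N$ (obtained by mixing $\bar X^{i,N}$ over a uniformly random index) one should allow the stopping time to depend on the index $i$, i.e.\ establish the estimate uniformly over families $(\tau_i)_i$ of $(\mathcal{F}_t)$-stopping times rather than a single common $\tau$; your argument already gives this for free since the It\^o estimate is performed particle-by-particle.
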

\begin{proof}
For positive constants $\alpha$ and $M$, we define a mapping $G_\alpha$ by
$$\mathcal{X} \ni f \mapsto G_\alpha(f) := \sup_{0\leq s < t \leq T} \frac{|f(t)-f(s)|}{|t-s|^\alpha}$$
and set
$$H_M := \Big\{ \mu \in \mathcal{P}\big(\mathcal{X}\big): \mu(G_\alpha(f)) \leq M \Big\}.$$

We claim that $H_M$ is tight (relatively compact) in $\mathcal{P}\big(\mathcal{X}\big)$. Indeed, for each $\tilde{M} \in (0, +\infty)$, since the domain $\mathcal{D}$ is bounded, we see that the set $B_{\tilde{M}} := \big\{f \in\mathcal{X}, G_\alpha(f) \leq \tilde{M}\big\}$ is relatively compact in
$\mathcal{X}$ according to the Arz\'{e}la-Ascoli theorem. On the other hand, by Chebyshev's inequality, we have
\begin{equation*}\label{T1}
\begin{aligned}
\sup_{\mu \in H_M}\mu(B^c_{\tilde{M}})& = \sup_{\mu \in H_M}\mu \big(\{f \in\mathcal{X}, G_\alpha(f) > \tilde{M}\} \big) \\
                                      & \leq \sup_{\mu \in H_M}\frac{\mu (G_\alpha(f))}{\tilde{M}}\\
                                      & \leq \frac{M}{\tilde{M}}.
\end{aligned}
\end{equation*}
Then for any $\varepsilon >0$, there exists a constant $\tilde{M}$ depending on $\varepsilon$ such that
$$\sup_{\mu \in H_M} \mu(B^c_{\tilde{M}}) \leq \varepsilon.$$
This shows that $H_M$ is relatively compact in $\mathcal{P}\big(\mathcal{X}\big)$. On the other hand, we have
\begin{equation}\label{T3}
\begin{aligned}
\sup_{N \in \mathbb{N}}\mathbb{P}(\bar \mu^N \in H^c_M) &= \sup_{N \in \mathbb{N}}\mathbb{P}(\bar \mu^N(G_\alpha(f))>M )\leq\frac{\sup_{N \in \mathbb{N}}\mathbb{E}[\bar \mu^N(G_\alpha(f))]}{M}.
\end{aligned}
\end{equation}

Next, we show that $\mathbb{E}[\bar \mu^N(G_\alpha(f))]$ is uniformly bounded with respect to $N$. From the controlled RSDE \eqref{II-eq2.3}, for any $0 \leq s \leq t \leq T$, we have
\begin{eqnarray}\label{T4}
\bar X^{i, N}(t) - \bar X^{i, N}(s)
&=& \int^t_s b(r, \bar X^{i,N}(r), \bar \mu^N(r)) \mathrm{d}r\nonumber\\
& &+ \int^t_s \sigma(r,\bar X^{i, N}(r), \bar\mu^N(r))h^N_i(r)\mathrm{d}r\nonumber\\
& &+ \int^t_s \sigma(r,\bar X^{i, N}(r), \bar \mu^N(r)) \dd W^i(r)\\
& &- \int_s^t \mathbf{n}(\bar X^{i, N}(r))\dd |\bar K^{i,N}|(r).\nonumber
\end{eqnarray}
Applying the It\^{o}'s formula to \eqref{T4}, we have
\begin{eqnarray*}\label{T5}
& &|\bar X^{i, N}(t) - \bar X^{i, N}(s)|^2 \nonumber\\
&=& 2 \int_s^t \langle \bar X^{i, N}(r) - \bar X^{i, N}(s) , b(r,\bar X^{i, N}(r), \bar\mu^N(r)) \rangle \mathrm{d}r\nonumber\\
& & + 2\int_s^t \langle \bar X^{i, N}(r) - \bar X^{i, N}(s) , \sigma(r,\bar X^{i, N}(r), \bar\mu^N(r))h^N_i(r) \rangle \mathrm{d}r\nonumber\\
& & + 2\int_s^t \langle \bar X^{i, N}(r) - \bar X^{i, N}(s) , \sigma(r,\bar X^{i, N}(r), \bar\mu^N(r))
\mathrm{d}W^i(r) \rangle\\
& & +  \int_s^t \| \sigma(r,\bar X^{i, N}(r), \bar\mu^N(r)) \|^2\mathrm{d}r\nonumber\\
& & + 2\int_s^t\langle \bar X^{i, N}(s)-\bar X^{i, N}(r) ,
\mathbf{n}(\bar X^{i, N}(r))\rangle \mathrm{d}|\bar K^{i, N}|(r) \nonumber\\
&= : & I^{s,t}_1 + I^{s,t}_2 + I^{s,t}_3 + I^{s,t}_4 + I^{s,t}_5.\nonumber
\end{eqnarray*}
Now, it follows from the definition of $G_\alpha$ that
\begin{equation*}
\begin{aligned}
\mathbb{E}[G^{2}_\alpha(\bar X^{i,N}(\cdot))] &= \mathbb{E}\Big[\sup_{0 \leq s < t \leq T} \frac{|\bar X^{i, N}(t) - \bar X^{i,N}(s)|^2}{|t-s|^{2\alpha}}\Big]\\
                                  &\leq \sum_{i=1}^5\mathbb{E}\Big[\sup_{0 \leq s < t \leq T}\frac{I^{s,t}_i}{|t-s|^{2\alpha}}\Big].
\end{aligned}
\end{equation*}

Note that there exists a positive constant $\tilde{L}$ such that $\mathcal{D} \subset B(0,\tilde{L})$.

Therefore, combining Assumption \ref{Ass2.1}(A3), we obtain that
\begin{equation}\label{T7}
\mathbb{E}\Big[\sup_{0 \leq s < t \leq T}\frac{I^{s,t}_1}{|t-s|^{2\alpha}}\Big] \leq 4L\tilde{L} \mathbb{E}[\sup_{0 \leq s < t \leq T} |t-s|^{1-2\alpha}],
\end{equation}
and
\begin{equation}\label{T8}
\mathbb{E}\Big[\sup_{0 \leq s < t \leq T}\frac{I^{s,t}_4}{|t-s|^{2\alpha}}\Big] \leq L^2 \mathbb{E}[\sup_{0 \leq s < t \leq T} |t-s|^{1-2\alpha}].
\end{equation}
By H\"{o}lder inequality, we have
\begin{eqnarray}\label{T9}
& &     \mathbb{E}\Big[\sup_{0 \leq s < t \leq                      T}\frac{I^{s,t}_2}{|t-s|^{2\alpha}}\Big]\\
&\leq&  2\mathbb{E}\Big[\sup_{0 \leq s < t \leq                     T}\frac{\int_s^t |\bar X^{i, N}(r) -\bar X^{i,              N}(s)|\cdot
        |\sigma(r,\bar X^{i, N}(r), \bar\mu^N(r))h^N_i(r)|\mathrm{d}r}{|t-s|^{2\alpha}}\Big]\nonumber\\
&\leq&  4\tilde{L}\mathbb{E}\Big[\sup_{0 \leq s < t \leq T}                 \frac{(\int_s^t\| \sigma(r,\bar X^{i, N}(r),                \bar\mu^N(r))\|^2\mathrm{d}r)^{\frac{1}{2}}                 \cdot(\int_s^t|h^N_i(r)|^2\mathrm{d}r)^{\frac{1}{2}}         }{|t-s|^{2\alpha}}\Big]\nonumber\\
&\leq&  4L\tilde{L}\mathbb{E}\Big[\sup_{0 \nonumber \leq s < t \leq T}         |t-s|^{\frac{1}{2}-2\alpha} \cdot                           (\int_s^t|h^N_i(r)|^2\mathrm{d}r)^{\frac{1}{2}}\Big].\nonumber
\end{eqnarray}
Since the domain $\mathcal{D}$ is convex, we have $I^{s,t}_5
\leq 0$ and hence
\begin{equation}\label{T10}
\mathbb{E}\Big[\sup_{0 \leq s < t \leq T}\frac{I^{s,t}_5}{|t-s|^{2\alpha}}\Big] \leq 0.
\end{equation}

Set
\[Z^i(t):= \int_0^t \langle \bar X^{i, N}(r), \sigma(r,\bar X^{i, N}(r), \bar\mu^N(r))
\mathrm{d}W^i(r) \rangle, \]
and
\[Y^i(t):= \int_0^t \sigma(r,\bar X^{i, N}(r), \bar\mu^N(r))
\mathrm{d}W^i(r) \]
Then, we have
\begin{equation}\label{T10-1}
I^{s,t}_3=2(Z^i(t)-Z^i(s))-2\langle \bar X^{i, N}(s), Y^i(t)-Y^i(s)\rangle.
\end{equation}
We now proceed to give an estimate for $|Z^i(t)-Z^i(s)|$.
By the Burkholder--Davis--Gundy inequality, for each $p > 2$ and $0 \leq s \leq t \leq T$,
\begin{eqnarray*}
& &\mathbb{E}[|Z^i(t) - Z^i(s) |^{2p}]\\
&=&\mathbb{E}\Big[\Big|\int_s^t \langle \bar X^{i, N}(r), \sigma(r,\bar X^{i, N}(r), \bar\mu^N(r))
\mathrm{d}W^i(r) \rangle\Big|^{2p}\Big]\nonumber\\
&\leq& C\tilde{L}^{2p}L^{2p}|t-s|^p,\nonumber
\end{eqnarray*}
where $C$ is a positive constant depending on $d_1$ and $p$. Applying the Garsia-Rodemich-Rumsey lemma (see Corollary 1.2 in \cite{WalshJohn}), there exists a random variable $A_i$ such that with probability one, for all $0 \leq s \leq t \leq T$,
\[  |Z^i(t,\omega) - Z^i(s,\omega)| \leq A_{i}(\omega)|t-s|^{\frac{p-2}{2p}}, \]
where $\mathbb{E}[A_{i}^{2p}] \leq C\tilde{L}^{2p}L^{2p}.$
By the similar arguments,
there exists also a random variable $B_i$ such that with probability one, for all $0 \leq s \leq t \leq T$,
\[  |Y^i(t,\omega) - Y^i(s,\omega)| \leq B_{i}(\omega)|t-s|^{\frac{p-2}{2p}}, \]
where $\mathbb{E}[B_{i}^{2p}] \leq CL^{2p}.$
It follows from (\ref{T10-1}) that
\begin{equation}\label{T12}
\begin{aligned}
\mathbb{E}\Big[\sup_{0 \leq s < t \leq T}\frac{I^{s,t}_3}{|t-s|^{2\alpha}}\Big] &\leq
                                                                 C\mathbb{E}\Big[\sup_{0 \leq s < t \leq T}\frac{|Z^i(t)-Z^i(s)|+|Y^i(t)-Y^i(s)|}{|t-s|^{2\alpha}}\Big]\\
                                                                         &\leq C\mathbb{E}[(A_i+B_i)\sup_{0 \leq s < t \leq T}|t-s|^{\frac{p-2}{2p}-2\alpha}].
\end{aligned}
\end{equation}
Let $p=4$ and $\alpha = \frac{1}{8}$. It follows from \eqref{T7}--\eqref{T12} that
\begin{equation*}\label{T13}
\begin{aligned}
\mathbb{E}[G^{2}_{\frac{1}{8}}(\bar X^{i,N}(\cdot))] &\leq
                                             \Big\{(4L\tilde{L} + L^2)T^{\frac{3}{4}} + C\mathbb{E}[A_{i}+B_i]+ 4L\tilde{L}T^{\frac{1}{4}}\mathbb{E}[(\int_0^T|h^N_i(r)|^2\mathrm{d}r)^{\frac{1}{2}}] \Big\}\\
                                         &\leq C\big(1+\mathbb{E}[\int_0^T|h^N_i(r)|^2\mathrm{d}r]\big),
\end{aligned}
\end{equation*}
for some positive constant $C$ independent of $N$.

Therefore, using the condition \eqref{IV-Ieq4.1}, for each $N \in \mathbb{N}$,
\begin{equation*}\label{T14}
\begin{aligned}
\mathbb{E}[\bar\mu^N(G_{\frac{1}{8}}(\cdot))] &= \frac{1}{N} \sum_{i=1}^N \mathbb{E}[G_{\frac{1}{8}}(\bar X^{i, N}(\cdot))]\\
&\leq \frac{1}{N} \sum_{i=1}^N (1+ \mathbb{E}[G^2_{\frac{1}{8}}(\bar X^{i.N}(\cdot))])\\
&\leq \tilde{C},
\end{aligned}
\end{equation*}
where $\tilde{C}$ is independent on $N$.
In combination with \eqref{T3}, it follows that for any $\varepsilon >0$, there exists a positive constant $M$ depending on $\varepsilon$ such that
\[ \sup_{N \in \mathbb{N}}\mathbb{P}(\bar \mu^N \in H^c_{M}) \leq \varepsilon .\]
This implies that the family of the laws of $\{\bar\mu^N, N \in \mathbb{N}\}$ is tight in $\mathcal{P}\big(\mathcal{P}\big(\mathcal{X}\big)\big)$.
\end{proof}


\subsection{Laplace Upper Bound}\label{sec:lower_bound}
In this subsection, we establish the Laplace upper bound \eqref{lowerbound} in {\bf Step 1}, namely, we will show that for any sequence $\{h^{N},N\in\mathbb{N}\}$ with $h^{N}\in\mathcal{H}_{N}$,
\begin{eqnarray}\label{lowerbound-1}
\begin{split}
&   \liminf_{N\mapsto\infty}\Big\{\frac{1}{2}\mathbb{E}\Big[\frac{1}{N}\sum^{N}_{i=1}\int^{T}_{0}|h^{N}_{i}    |^{2}\mathrm{d}t\Big]+\mathbb{E}\big[F(\bar{\mu}^{N})\big]\Big\}\\
&   \geq\inf_{\Theta\in\mathcal{P}_{\infty}}\Big\{
    \frac{1}{2}\int_{\mathcal{R}_1}\int_{\mathbb{R}^{d_{1}}\times[0,T]}|y|^{2}r(\mathrm{d}y\times\mathrm{d}t)    \Theta_{\mathcal{R}}(\mathrm{d}r)+F(
    \Theta_{\mathcal{X}})\Big\}.
\end{split}
\end{eqnarray}
 For a sequence $\{h^{N} \in \mathcal{H}_{N}, N\in\mathbb{N}\}$, we may assume that
\begin{equation*}\label{IV-ieq4.1}
    \mathbb{E}\left[\frac{1}{N}\sum^{N}_{i=1}\int^{T}_{0}|h^{N}_{i}(t)|^{2}\mathrm{d}t\right]\leq 4\|F\|_{\infty},
\end{equation*}
since otherwise the inequality \eqref{lowerbound-1} is automatic.

For each $N \in \mathbb{N}$, define a $\mathcal{P}(\mathcal{Z})$-valued random variable by
\begin{equation}\label{V2}
Q^N_\omega (A\times R \times B) := \frac{1}{N} \sum_{i=1}^N \delta_{\bar X^{i,N}(\cdot,\omega)}(A) \cdot \delta_{\rho^{i, N}_\omega}(R)
\cdot \delta_{ W^{i}(\cdot,\omega)}(B)
\end{equation}
for any $A\times R \times B \in \mathcal{B}(\mathcal{Z})$ and $\omega \in \Omega$, where $\bar X^{i,N}$ is the solution of equation \eqref{II-eq2.3} with $h^{N}=(h^{N}_1, ... ,h^{N}_N)$ and
\begin{equation*}\label{II-eq2.4}
\rho^{i,N}_{\omega}(B\times I):=\int_{I}\delta_{h^{N}_i(t,\omega)}(B)\mathrm{d}t,\quad B\in\mathcal{B}(\mathbb{R}^{d_{1}}), \quad I\in\mathcal{B}([0,T]),\quad \omega\in\Omega.
\end{equation*}

\begin{lemma}\label{lem1}
The family $\{Q^N,N \in \mathbb{N}\}$ of $\mathcal{P}(\mathcal{Z})$-valued random variables is tight.
\end{lemma}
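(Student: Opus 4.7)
I would show tightness of $\{Q^N\}$ in $\mathcal{P}(\mathcal{P}(\mathcal{Z}))$ by reducing it to tightness of the mean measures $\{\mathbb{E}[Q^N]\}$ in $\mathcal{P}(\mathcal{Z})$, a standard equivalence on a Polish space (one direction via Markov's inequality and Prokhorov, the other by integration). Since $\mathcal{Z} = \mathcal{X} \times \mathcal{R}_1 \times \mathcal{W}$ is a Polish product, it then suffices to verify tightness of each of the three marginals of $\mathbb{E}[Q^N]$ separately.

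For the $\mathcal{X}$-marginal, I would invoke Lemma \ref{tightness}, which already establishes tightness of the laws of $\bar{\mu}^N = Q^N_{\mathcal{X}}$ in $\mathcal{P}(\mathcal{P}(\mathcal{X}))$; tightness of the laws of these random measures readily yields tightness of their mean measures in $\mathcal{P}(\mathcal{X})$. For the $\mathcal{W}$-marginal, $\mathbb{E}[Q^N_{\mathcal{W}}]$ coincides with the standard Wiener measure for every $N$, so tightness is automatic.

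The main work is for the $\mathcal{R}$-marginal (which lives in $\mathcal{P}(\mathcal{R}_1)$). I would take as candidate compact sets
\begin{equation*}
\mathcal{K}_M := \Big\{r \in \mathcal{R}_1:\ \int_{\mathbb{R}^{d_1} \times [0,T]} |y|^2\, r(\mathrm{d}y \times \mathrm{d}t) \leq M\Big\}, \quad M > 0,
\end{equation*}
and verify two items. First, $\mathcal{K}_M$ is relatively compact in $\mathcal{R}_1$: Chebyshev yields $r(\{|y| > R\} \times [0,T]) \leq M/R^2$ uniformly over $\mathcal{K}_M$, which together with the compactness of $[0,T]$ produces tightness in the weak topology of $\mathcal{R}$; the estimate $\int_{|y|>R} |y|\, r \leq M/R$ provides uniform integrability of $|y|$, upgrading weak convergence along a subsequence to convergence of first moments, that is, to convergence in $\mathcal{R}_1$; and lower semicontinuity of $r \mapsto \int |y|^2 r$ places the limit back in $\mathcal{K}_M$. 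Second, with the working assumption $\mathbb{E}\big[\tfrac{1}{N}\sum_i \int_0^T |h^N_i(t)|^2 \mathrm{d}t\big] \leq 4\|F\|_\infty$, Markov's inequality gives
\begin{equation*}
\mathbb{E}\big[Q^N_{\mathcal{R}}(\mathcal{K}_M^c)\big] = \frac{1}{N}\sum_{i=1}^N \mathbb{P}\Big(\int_0^T |h^N_i(t)|^2\, \mathrm{d}t > M\Big) \leq \frac{4\|F\|_\infty}{M},
\end{equation*}
which can be made arbitrarily small by taking $M$ large, uniformly in $N$, yielding tightness of $\mathbb{E}[Q^N_{\mathcal{R}}]$ in $\mathcal{P}(\mathcal{R}_1)$.

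The main technical point is to handle the finer topology of $\mathcal{R}_1$ (weak convergence \emph{plus} convergence of first moments), as opposed to the plain weak topology of $\mathcal{R}$. The quadratic a priori bound on the controls is exactly what makes this work: Chebyshev applied to $|y|^2$ provides the spatial tightness needed for weak convergence, and the gap between $|y|$ and $|y|^2$ then supplies the uniform integrability required to pass to the first-moment topology.
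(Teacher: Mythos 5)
Your proof is correct and follows essentially the same route as the paper, which itself defers the $\mathcal{R}_1$- and $\mathcal{W}$-marginal arguments to Lemma~5.1 of Budhiraja, Dupuis and Fischer \cite{BudhirajaDF}: reduce to tightness of the mean measures, obtain the $\mathcal{X}$-marginal from Lemma~\ref{tightness}, observe that the $\mathcal{W}$-marginal is Wiener measure for free, and handle the $\mathcal{R}_1$-marginal via compactness of the bounded-second-moment sets $\mathcal{K}_M$ of relaxed controls together with Markov's inequality and the a priori $L^2$ bound on the controls. You have simply filled in the details that the paper leaves to the citation.
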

\begin{proof}
By Lemma \ref{tightness}, we know that the family of the first marginals of $\{Q^N,N \in \mathbb{N}\}$  is tight. The rest of the proof is the same as that of Lemma 5.1 in \cite{BudhirajaDF}, we omit the details.
\end{proof}

\vskip0.3cm
The following result is crucial for the proof the Laplace upper bound.
\begin{theorem}\label{lem2}
Let $\{Q^{N_j},j \in \mathbb{N}\}$ be a weakly convergent subsequence of $\{Q^N,N \in \mathbb{N}\}$, with $Q$ being a  $\mathcal{P}(\mathcal{Z})$-valued random variable defined on some probability space $(\tilde{\Omega},\tilde{\mathcal {F}},\tilde{\mathbb{P}})$ such that $Q^{N_j}\stackrel{j\rightarrow\infty}{\longrightarrow} Q$ in distribution. Then, $Q_\omega \in \mathcal{P}_{\infty}$ for $\tilde{\mathbb{P}}$-almost all $\omega \in \tilde{\Omega}$, where $\mathcal{P}_{\infty}$ is the family (defined in Section 2) of weak solutions of the controlled reflected McKean-Vlasov equations:
\begin{align}\label{II-eq2.7-1}
\left\{
\begin{aligned}
&\mathrm{d}\bar{X}(t)=b(t,\bar{X}(t),\nu_{\Theta}(t))\mathrm{d}t+\int_{\mathbb{R}^{d_{1}}}\sigma(t,\bar{X}(t),\nu_{\Theta}(t))y\rho_{t}(\mathrm{d}y))\mathrm{d}t\\
&\quad\quad\quad\quad+\sigma(t,\bar{X}(t),\nu_{\Theta}(t)))\mathrm{d}W(t)-\mathrm{d}\bar{K}(t),\\
& |\bar{K}|(t)=\int^{t}_{0}\mathbf{1}_{\partial \mathcal{D}}(\bar{X}(s))\mathrm{d}|\bar{K}|(s),\quad \bar{K}(t)=\int^{t}_{0}\mathbf{n}(\bar{X}(s))\mathrm{d}|\bar{K}|(s),\\
&\nu_{\Theta}(t)=\mbox{Law}(\bar{X}(t)),\quad \bar{X}(t)\in\overline{\mathcal{D}},\quad t\in[0,T].\\
\end{aligned}
\right.
\end{align}


\end{theorem}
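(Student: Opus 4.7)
The plan is to verify the three defining conditions of $\mathcal{P}_\infty$ for $Q_\omega$ almost surely in $\omega$. Condition (i), namely the finiteness of $\int_{\mathcal{R}_1}\int |y|^2\, r(\dd y \times \dd t)\, Q_{\omega,\mathcal{R}}(\dd r)$, follows from lower semicontinuity of $\Theta \mapsto \int\int|y|^2\, r\, \Theta_{\mathcal{R}}(\dd r)$ under weak convergence together with the a priori bound $\tilde{\mathbb{E}}[\int\int|y|^2\, r\, Q^N_{\mathcal{R}}(\dd r)] = \mathbb{E}[\tfrac{1}{N}\sum_i \int_0^T |h^N_i|^2 \dd t] \leq 4\|F\|_\infty$ (which we may assume, else \eqref{lowerbound-1} is automatic) and Fatou's lemma. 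Condition (iii), $\nu_{Q_\omega}(0) = \nu_0$, follows because $\nu_{Q^N_\omega}(0) = \tfrac{1}{N}\sum_i \delta_{x^{i,N}}$ is deterministic and converges weakly to $\nu_0$ by Assumption~\ref{Ass2.1}(A2), while $\Theta \mapsto \nu_\Theta(0)$ is weakly continuous via continuity of $\pi_0$; likewise $Q_\omega(\{w(0)=0\})=1$ passes to the limit. The substance of the theorem is condition (ii), which by Theorem~\ref{Thm 3.1} is equivalent to asserting that, for every $f \in C^{1,2,2}_0([0,T]\times\overline{\mathcal{D}}\times\mathbb{R}^{d_1})$ satisfying $\langle \nabla_x f, \mathbf{n}\rangle \leq 0$ on $[0,T]\times\partial\mathcal{D}\times\mathbb{R}^{d_1}$, the process $M^{Q_\omega}_f$ is a $Q_\omega$-sub-martingale with respect to $(\mathcal{G}_t)$.

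To handle this, fix such an $f$, times $0 \leq s < t \leq T$, and a bounded continuous non-negative $\mathcal{G}_s$-measurable functional $g$ on $\mathcal{Z}$, and set
\[
\Phi(\Theta) := \int_{\mathcal{Z}} \bigl(M^\Theta_f(t,z) - M^\Theta_f(s,z)\bigr)\, g(z)\, \Theta(\dd z), \qquad \Theta \in \mathcal{P}(\mathcal{Z}),
\]
so that the desired sub-martingale property is $\Phi(Q_\omega) \geq 0$ almost surely. For $\Theta = Q^N_\omega$ one has $\nu_{Q^N_\omega}(r) = \bar\mu^N(r)$ and $\rho^{i,N}_r = \delta_{h^N_i(r)}$, so applying It\^o's formula to $f(r, \bar X^{i,N}(r), W^i(r))$ along the controlled RSDE \eqref{II-eq2.3} cancels the compensator in $M^{Q^N_\omega}_f$ and yields
\begin{align*}
\Phi(Q^N_\omega) &= \frac{1}{N}\sum_{i=1}^N g(\xi^{i,N}) \int_s^t \bigl[(\nabla_x f)\sigma + \nabla_z f\bigr](r, \bar X^{i,N}(r), W^i(r))\, \dd W^i(r) \\
&\quad - \frac{1}{N}\sum_{i=1}^N g(\xi^{i,N}) \int_s^t \langle \nabla_x f(r, \bar X^{i,N}(r), W^i(r)), \mathbf{n}(\bar X^{i,N}(r))\rangle\, \dd|\bar K^{i,N}|(r),
\end{align*}
where $\xi^{i,N} = (\bar X^{i,N}, \rho^{i,N}, W^i)$. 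The second sum is non-negative because $\langle \nabla_x f, \mathbf{n}\rangle \leq 0$ on $\partial\mathcal{D}$, $\dd|\bar K^{i,N}|$ is supported on $\{\bar X^{i,N} \in \partial\mathcal{D}\}$, and $g \geq 0$. The first sum has $L^2$-norm of order $N^{-1/2}$ by independence of the $(W^i)$ and boundedness of $\sigma$, $\nabla_x f$, $\nabla_z f$, $g$; it therefore vanishes in probability.

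To pass to the limit along $(N_j)$, we establish continuity of $\Phi$ on each level set $\mathcal{P}^M := \{\Theta : \int\int|y|^2\, r\, \Theta_{\mathcal{R}}(\dd r) \leq M\}$, which is closed by the lower semicontinuity noted above. If $\Theta_n \to \Theta$ weakly with $\Theta_n \in \mathcal{P}^M$, then continuity of $\pi_u: \mathcal{X}\to\overline{\mathcal{D}}$ gives $\nu_{\Theta_n}(u) \to \nu_\Theta(u)$ in the metric $\Pi_{\overline{\mathcal{D}}}$ for every $u$, and Assumption~\ref{Ass2.1}(A3) yields $\mathcal{A}^{\Theta_n}_u(f) \to \mathcal{A}^\Theta_u(f)$ uniformly in $(u,x,z)$ with linear growth in $y$. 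Splitting
\[
\Phi(\Theta_n) - \Phi(\Theta) = \int (M^{\Theta_n}_f - M^\Theta_f)\, g\, \dd\Theta_n + \int M^\Theta_f\, g\, (\dd\Theta_n - \dd\Theta),
\]
the first summand is controlled by the uniform $M$-bound via Cauchy--Schwarz and dominated convergence, and the second by weak convergence together with uniform integrability provided by the same bound. Since $\tilde{\mathbb{P}}(Q^N \notin \mathcal{P}^M) \leq 4\|F\|_\infty/M$ by Markov, standard localization in $M$ combined with $Q^{N_j}\to Q$ in distribution and the Portmanteau theorem applied to the closed half-line $[-\varepsilon,\infty)\subset\mathbb{R}$ gives $\tilde{\mathbb{P}}(\Phi(Q) \geq -\varepsilon) = 1$ for every $\varepsilon > 0$, hence $\Phi(Q_\omega) \geq 0$ almost surely. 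A countable dense family of admissible $(f, g, s, t)$ then yields the sub-martingale property almost surely, and Theorem~\ref{Thm 3.1} gives $Q_\omega \in \mathcal{P}_\infty$ a.s. The principal obstacle is the continuity of $\Phi$: in contrast to the unreflected setting of \cite{BudhirajaDF}, $M^\Theta_f$ depends on $\Theta$ nonlinearly through the coefficients $b(\cdot,\cdot,\nu_\Theta)$ and $\sigma(\cdot,\cdot,\nu_\Theta)$ inside $\mathcal{A}^\Theta$, and the cross term $\sigma(u,x,\nu_\Theta(u)) y \cdot \nabla_x f$ grows linearly in the unbounded control variable $y$, so the uniform second-moment control from the cost constraint is essential; the reflection itself enters favorably only through the boundary integral whose sign is secured by the condition $\langle \nabla_x f, \mathbf{n}\rangle\leq 0$.
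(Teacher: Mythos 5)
Your overall plan matches the paper's: conditions (i) and (iii) via Fatou's lemma and Assumption~(A2); condition (ii) via the sub-martingale criterion of Theorem~\ref{Thm 3.1}, an It\^o decomposition of the test functional $\Phi$, the sign of the boundary-local-time term provided by $\langle \nabla_x f,\mathbf{n}\rangle \le 0$, and an $O(N^{-1/2})$ $L^2$-estimate on the martingale part. These ingredients are correct and identical in spirit to what the paper does.

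However, there is a genuine gap in the passage to the limit. You argue that $\Phi$ is continuous on each closed level set $\mathcal{P}^M$, and then invoke ``standard localization in $M$'' plus the Portmanteau theorem to transfer the approximate sign of $\Phi(Q^{N_j})$ to $\Phi(Q)$. Continuity of $\Phi\vert_{\mathcal{P}^M}$ in the relative topology does \emph{not} imply continuity of $\Phi$ at any point of $\mathcal{P}^M$ in $\mathcal{P}(\mathcal{Z})$: a sequence converging weakly to some $\Theta \in \mathcal{P}^M$ need not stay in $\mathcal{P}^M$ for any $M$ (e.g.\ mix in a vanishing fraction of mass carrying an exploding control cost), and on such sequences $\Phi$ may fail to converge because of the linear-in-$y$ term in the compensator. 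Consequently the continuous mapping theorem does not apply to $\Phi$, and ``Portmanteau on $[-\varepsilon,\infty)$'' has nothing to act on: that step tacitly requires $\Phi(Q^{N_j}) \to \Phi(Q)$ in distribution, which is exactly what is missing. You correctly diagnose the obstruction (the unbounded control variable $y$) but propose a remedy (level-set restriction) that does not close the argument.

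The paper's remedy is different and is the missing ingredient: truncate the integrand rather than the measure. Replace $y$ by a compactly supported $g_B(y)$ inside $\mathcal{A}^\Theta_s$ to obtain $\Phi_B$. This $\Phi_B$ is continuous on \emph{all} of $\mathcal{P}(\mathcal{Z})$, hence $\Phi_B(Q^{N_j}) \to \Phi_B(Q)$ in distribution by continuous mapping. One then shows $\sup_j \tilde{\mathbb{E}}\lvert \Phi(Q^{N_j}) - \Phi_B(Q^{N_j})\rvert \to 0$ and $\tilde{\mathbb{E}}\lvert\Phi(Q) - \Phi_B(Q)\rvert \to 0$ as $B \to \infty$, using Chebyshev and the uniform second-moment cost bound. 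These two uniform $L^1$-approximations, combined with the distributional convergence of $\Phi_B$, give the distributional convergence of $\Phi(Q^{N_j})$ to $\Phi(Q)$, after which your Portmanteau step (or the paper's equivalent argument via $\mathbb{E}[\Phi^2(Q^{N_j})] \to 0$ for the negative part) goes through. Your computations establishing the sign of the reflection term and the $L^2$-decay of the stochastic integral are correct and carry over verbatim; it is only the continuity/localization step that needs to be replaced by the truncation-of-integrand argument.
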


\begin{proof}
Set $I:= \{ N_j, j \in \mathbb{N}  \}$ and write $(Q^n)_{n \in I}$ for $(Q^{N_j})_{j \in \mathbb{N}}$, then $Q^n \rightarrow Q$ in distribution. Note that by Fatou's Lemma (see Theorem A.3.12 in \cite{DupuisEllis}) and Assumption \ref{Ass2.1}(A2), it is easy to see that for $\tilde{\mathbb{P}}$-a.s. $\omega \in \tilde{\Omega}$, $Q_\omega$ satisfies conditions (i) and (iii) in the definition of $\mathcal{P}_{\infty}$. To complete the proof, we need to prove that for $\tilde{\mathbb{P}}$-a.s. $\omega \in \tilde{\Omega}$, $Q_\omega$ is a weak solution to equation \eqref{II-eq2.7-1}

According to  Theorem \ref{Thm 3.1}, a probability measure $\Theta \in \mathcal{P}(\mathcal{Z})$ with $\Theta(\{(\phi,r,w)\in\mathcal{Z}:w(0)=0\})=1$ is a weak solution of equation \eqref{II-eq2.7-1} if for all $f\in
C^{1,2,2}_{0}([0,T]\times \overline{\mathcal{D}}\times \mathbb{R}^{d_{1}})$ with $\langle \nabla_{x}f(t,x,z),\mathbf{n}(x)\rangle\leq 0$ on $[0,T]\times\partial{\mathcal{D}}\times\mathbb{R}^{d_{1}}$, $M^{\Theta}_{f}$ (defined in \eqref{III-eq3.1} and \eqref{III-eq3.2}) is a sub-martingale under $\Theta$ with respect to the canonical filtration $(\mathcal{G}_{t})$, i.e., the following holds
\begin{equation}\label{V4''}
\mathbb{E}_{\Theta}[\Psi \cdot (M^\Theta_f(t_1 )-  M^\Theta_f(t_0 ))] \geq 0
\end{equation}
for all $t_0, t_1 \in [0,T]$ with $t_0 \leq t_1$, and $\mathcal{G}_{t_0}$-measurable $\Psi \in C_b^{+}(\mathcal{Z})$. Here $C_{b}^{+}(\mathcal{Z})$ denotes the space of nonnegative functions in $C_{b}(\mathcal{Z})$.

Note that it is enough to show that \eqref{V4''} holds for any countable collection of times $t_0$ and $t_1$ that is dense in $[0,T]$, any countable collection of $\Psi \in C_b^{+}(\mathcal{Z})$ that generates the $\sigma$-algebras $\mathcal{G}_{t_0}$, and the countable collection of test function $f$ that is dense in $C^{1,2,2}_0([0,T]\times \overline{\mathcal{D}} \times \mathbb{R}^{d_1})$ with $\langle \nabla_x f(t,x,z), n(x) \rangle \leq 0$ on $[0,T]\times\partial \mathcal{D} \times \mathbb{R}^{d_1}$. Thus, there is a countable collection $\mathcal{I} \subset [0,T]^2 \times C_b^{+}(\mathcal{Z}) \times
\{f \in C_0^{1,2,2}([0,T]\times \overline{\mathcal{D}} \times \mathbb{R}^{d_1}) : \langle \nabla_x f(t,x,z),\mathbf{n}(x) \rangle \leq 0 \ \mbox{on} \ [0,T] \times \partial\mathcal{D} \times \mathbb{R}^{d_1}\}$ of test parameters such that if \eqref{V4''} holds for all $(t_0, t_1, \Psi, f) \in \mathcal{I}$, then $\Theta$ is a weak solution of equation \eqref{II-eq2.7-1}.

To verify the sub-martingale property of $M^\Theta_f$ with $\Theta=Q_{\omega}$, $\omega \in \tilde{\Omega}$, we introduce a continuous function with compact support. For each $B\in(0,\infty)$, let $g_{B}:\mathbb{R}^{d_1}\rightarrow \mathbb{R}^{d_1}$ be a continuous function with compact support satisfying $g_{B}(y)=y$ for $|y|\leq B$ and $|g_B(y)| \leq |y|+1$ for every $y \in \mathbb{R}^{d_1}$. Also, define the operator $\mathcal{A}^{\Theta,B}_{s}$ by replacing $y$ on the right side of \eqref{III-eq3.2} with $g_B(y)$ and define $M^{\Theta,B}_f$ by replacing $\mathcal{A}^{\Theta}_s$ in \eqref{III-eq3.1} with $\mathcal{A}^{\Theta,B}_{s}$.

For each $(t_0, t_1, \Psi, f)\in \mathcal{I}$, define $\Phi = \Phi_{(t_0, t_1, \Psi, f)}$ by
\begin{equation}\label{fubu}
\mathcal{P}(\mathcal{Z}) \ni \Theta \mapsto \Phi(\Theta) := \bigg(\mathbb{E}_{\Theta}[\Psi \cdot (M^\Theta_f(t_1 )-M^\Theta_f(t_0 ))]\bigg)^{-},
\end{equation}
where for each $a \in \mathbb{R}$, $a^{-}:=\max\{-a,0\}$. Similarly, for each $B \in (0, \infty)$, define $\Phi_{B}$ by replacing $M^{\Theta}_{f}$ in \eqref{fubu} with $M^{\Theta,B}_{f}$.

Similarly to the proof of Lemma 3.2 in \cite{BC}, we obtain that
(i) $\forall B \in (0,\infty)$, $\Phi_{B}$ is continuous on $\mathcal{P}(\mathcal{Z})$,
(ii) $\sup_{n}{\mathbb{E}}|\Phi(Q^{n})-\Phi_{B}(Q^{n})|\rightarrow 0$,
and (iii) $\tilde{\mathbb{E}}|\Phi(Q)-\Phi_{B}(Q)|\rightarrow 0$ as $B \rightarrow \infty$. For fixed $B>0$, $\Phi_B(Q^n) \rightarrow \Phi_B(Q)$ in distribution. The above properties yield that $\Phi(Q^n) \rightarrow \Phi(Q)$ in distribution as well.
By the definition of $Q^n$, for $\omega \in \Omega$, we obtain
\begin{eqnarray*}
&&\Phi(Q^n_\omega)\\
&=&\Big(\mathbb{E}_{Q^n_\omega}\left[\Psi \cdot \big( M^{Q^n_\omega}_f(t_1)- M^{Q^n_\omega}_f(t_0) \big)\right]\Big)^{-}\\
&=&\Big(\frac{1}{n}\sum_{i=1}^{n}\Psi\big( (\bar X^{i,n}(.,\omega), \rho^{i,n}_\omega, W^i(.,\omega)) \big)\\
   &&\cdot\big(f(t_1,\bar X^{i,n}(t_1 , \omega), W^i(t_1 , \omega) ) -f(t_0,\bar X^{i,n}(t_0 , \omega), W^i(t_0 , \omega))\\
   &&-\int_{t_0}^{t_1}f_s(s,\bar X^{i,n}(s,\omega),W^i(s, \omega))\mathrm{d}s \\
   &&-\int_{t_0}^{t_1}\mathcal {A}^{Q^{n}_{\omega}}_s(f)(s,\bar X^{i,n}(s,\omega), h_i^n(s,\omega), W^i(s, \omega))\mathrm{d}s  \big)\Big)^{-},
\end{eqnarray*}
where $\mathcal {A}^{Q^{n}_{\omega}}$ is defined in \eqref{III-eq3.2} with $\bar{\mu}^n_\omega$ in place of $\nu_\Theta$.

Applying the It\^{o}'s formula, for each $i$, we have
\begin{equation*}\label{V9''}
\begin{aligned}
   &f\big( t_1,\bar X^{i,n}(t_1),W^i(t_1)  \big)
-  f\big(t_0, \bar X^{i,n}(t_0), W^i(t_0)\big)\\
&\ \ \ \  -\int_{t_0}^{t_1}f_s(s,\bar X^{i,n}(s),W^i(s))\mathrm{d}s\\
&\ \ \ \  -\int_{t_0}^{t_1}\mathcal {A}^{Q^{n}}_s(f)\big(s,\bar X^{i,n}(s), h_i^n(s), W^i(s)\big)\mathrm{d}s\\
&= \int_{t_0}^{t_1}
         \langle \nabla_xf(s,\bar X^{i,n}(s), W^i(s)) , \sigma(s,\bar X^{i,n}(s), \bar\mu^n(s))\mathrm{d}W^i(s) \rangle\\
&\ \ \ \  +\int_{t_0}^{t_1}
          \langle \nabla_zf(s,\bar X^{i,n}(s), W^i(s)), \mathrm{d}W^i(s) \rangle \\
&\ \ \ \  -\int_{t_0}^{t_1}
          \langle \nabla_xf(s,\bar X^{i,n}(s), W^i(s)) , \mathrm{d}\bar K^{i,n}_s \rangle.
\end{aligned}
\end{equation*}

Keeping in mind that $\Psi$ is $\mathcal{G}_{t_0}$-measurable and non-negative, we have
\begin{eqnarray*}
&&\mathbb{E}[{\Phi^2(Q^n)}]\\
&=&\mathbb{E}\bigg[ \bigg(\Big (\mathbb{E}_{Q^n_\omega}
        \big[\Psi \cdot\big(M^{Q^n_\omega}_f(t_1 )- M^{Q^n_\omega}_f(t_0) \big) \big]\Big)^{-}\bigg)^{2} \bigg]\\
&=&\mathbb{E}\bigg[ \bigg( \Big( \frac{1}{n}\sum_{i=1}^n \int_{t_0}^{t_1}
        \Psi \cdot \big(\langle \nabla_xf(s,\bar X^{i,n}(s), W^i(s)) , \sigma(s,\bar X^{i,n}(s), \bar\mu^n(s))\mathrm{d}W^i(s) \rangle \\
    &&+\langle \nabla_zf(s,\bar X^{i,n}(s), W^i(s)), \mathrm{d}W^i(s) \rangle
           - \langle \nabla_xf(s,\bar X^{i,n}(s), W^i(s)) , \mathrm{d}\bar K^{i,n}_s \rangle \big) \Big )^{-}  \bigg)^2\bigg]\\
&\leq& \mathbb{E}\bigg[ \bigg( \Big( \frac{1}{n}\sum_{i=1}^n \int_{t_0}^{t_1}
         \Psi\cdot \big(\langle \nabla_xf(s,\bar X^{i,n}(s), W^i(s)) , \sigma(s,\bar X^{i,n}(s), \bar\mu^n(s))\mathrm{d}W^i(s) \rangle \\
    &&+\langle \nabla_zf(s,\bar X^{i,n}(s), W^i(s)), \mathrm{d}W^i(s) \rangle \big) \Big)^{-}\\
    &&+ \Big( -\frac{1}{n}\sum_{i=1}^n \int_{t_0 }^{t_1 } \Psi\cdot \langle \nabla_xf(s,\bar X^{i,n}(s), W^i(s)) , \mathrm{d}\bar K^{i,n}_s \rangle \Big)^{-} \bigg)^2\bigg]\\
&\leq& \frac{1}{n^2}\sum_{i=1}^n\mathbb{E}\bigg[\int_{t_0}^{t_1}
          \Big|\Psi \cdot \Big( \nabla_zf(s,\bar X^{i,n}(s), W^i(s)) \\
    &&+\nabla_xf(s,\bar X^{i,n}(s), W^i(s))\sigma(s,\bar X^{i,n}(s), \bar\mu^n(s))\Big)\Big|^2\mathrm{d}s\bigg] \\
&\stackrel{n\rightarrow\infty}{\longrightarrow}&0,
\end{eqnarray*}
where we have used the fact that
$$-\frac{1}{n}\sum_{i=1}^n \int_{t_0 }^{t_1 } \Psi\cdot \langle \nabla_xf(s,\bar X^{i,n}(s), W^i(s)) , \mathrm{d}\bar K^{i,n}_s \rangle\geq 0.$$
Since $\mathcal{I}$ is countable, it follows that for $\tilde{\mathbb{P}}$-almost all $\omega \in \tilde{\Omega}$, all $(t_0, t_1, \Psi, f) \in \mathcal{T}$,
\[\mathbb{E}_{Q_\omega}[\Psi \cdot (M^{Q_\omega}_f(t_1)-M^{Q_\omega}_f(t_0 )]\geq 0 ,\]
which implies that for $\tilde{\mathbb{P}}$-almost all $\omega \in \tilde{\Omega}$, $Q_\omega$ is a weak solution of equation \eqref{II-eq2.7-1} and we complete the proof of the theorem.
\end{proof}

Now we are ready to complete the proof of the upper bound \eqref{lowerbound}. By the definition of $Q^N$, the left-hand side of  \eqref{lowerbound} can be rewritten as
\begin{eqnarray*}\label{K12}
&&\frac{1}{2} \mathbb{E}\bigg[ \frac{1}{N} \sum_{i=1}^N \int_0^T|h_i^N(t)|^2\mathrm{d}t \bigg] + \mathbb{E} [F(\bar\mu^{N})] \\
 &=& \int_\Omega \bigg[\int_{\mathcal{R}_1}\bigg( \frac{1}{2}\int_{\mathcal{R}^{d_1} \times [0,T]}|y|^2 r(\mathrm{d}y \times \mathrm{d}t) \bigg)Q^N_{\omega,\mathcal{R}}(\mathrm{d}r) + F(Q^N_{\omega,\mathcal{X}})  \bigg]\mathbb{P}(\dd \omega).
\end{eqnarray*}
Noticing that the function $F$ is bounded and continuous, by Lemma \ref{lem1} and Theorem \ref{lem2}, Fatou's Lemma, the Laplace upper bound is immediate.

\subsection{Laplace lower Bound}\label{sec:upper_bound}
In this subsection, we proves the Laplace lower bound \eqref{superbound} in {\bf Step 2}. The proof is very similar to the ones in \cite{BC} and \cite{BudhirajaDF}. For completeness, we give a sketch.

Let $\Theta \in \mathcal{P}_\infty$. We will construct a sequence $\{h^N,N \in \mathbb{N}\}$ with $h^N \in \mathcal {H}_N$ on a common stochastic basis satisfying \eqref{superbound}:
\begin{eqnarray*}
&  &\limsup_{N\to\infty}\Big\{\frac{1}{2}\mathbb{E}\big[\frac{1}{N}\sum^{N}_{i=1}\int^{T}_{0}
    |h^{N}_{i}|^{2}\mathrm{d}t\big]+\mathbb{E}[F(\bar{\mu}^{N})]\Big\}\\
& \leq&
    \frac{1}{2}\int_{\mathcal{R}_1}\int_{\mathbb{R}^{d_{1}}\times[0,T]}|y|^{2}r(\mathrm{d}y\times\mathrm{d}t)    \Theta_{\mathcal{R}}(\mathrm{d}r)+F(
    \Theta_{\mathcal{X}}).
\end{eqnarray*}

Recall $(\bar X, \rho, W)$ from Section \ref{AC}, which is the canonical process on $(\mathcal{Z}, \mathcal{B}(\mathcal{Z}),(\mathcal{G}_t),\Theta)$ and $\bar{v}$ from Section \ref{sec:main_result}, which is the map from $\mathcal{Z}$ to $\mathcal{Z}^{0}$. Then we can disintegrate $\Theta\circ \bar{v}^{-1}$ as
\[  \Theta\circ \bar{v}^{-1}:= \nu_{0}(\mathrm{d}\phi_{0})\Theta_{\mathcal{W}}(\mathrm{d}w)\bar{\lambda}(\mathrm{d}r|\phi_{0},w).  \]

Define a product space $(\Omega_\infty, \mathcal {F}^\infty)$ as the countably infinite product of
$(\mathcal{R}_{1} \times \mathcal{W}, \mathcal{B}(\mathcal{R}_{1} \times \mathcal{W}))$ and write $(r^{\prime}, w^{\prime})$ as the element of $\Omega_\infty$, where $r^{\prime}=(r_1, r_2, \ldots)$ and $w^{\prime}=(w_1, w_2, \ldots)$. For each $i \in \mathbb{N}$, define
\[ W^{i, \infty}(t, (r^{\prime}, w^{\prime})) := w_i(t), \ \ \rho^{i, \infty}(r^{\prime}, w^{\prime}) :=r_i, \]
and
\[ h^\infty_i(t, (r^{\prime}, w^{\prime}))  :=
\int_{\mathbb{R}^{d_1}}y\rho^{i, \infty}_{(r^{\prime}, w^{\prime}), t}(\mathrm{d}y),\ (r^{\prime}, w^{\prime}) \in \Omega_\infty, \ t \in [0,T],\]
where $\rho^{i, \infty}_{(r^{\prime}, w^{\prime}), t}$ is the derivative measure of $\rho^{i, \infty}(r^{\prime}, w^{\prime})$ at time $t$.
Furthermore, for each $N \in \mathbb{N}$, define $\mathbb{P}_{N} \in \mathcal{P}(\Omega_\infty)$ as
\[\mathbb{P}_{N}(\mathrm{d}r,\mathrm{d}w):=\bigotimes_{i=1}^{N}\Theta_{\mathcal{W}}(\mathrm{d}w_{i})\bar{\lambda}(\mathrm{d}r_{i}|x^{i,N},w_{i})
\bigotimes_{i=N+1}^{\infty}\Theta\circ(\rho, W)^{-1}(\mathrm{d}r_{i},\mathrm{d}w_{i}),      \]
and define a $\mathcal{P}(\overline{\mathcal{D}}\times \mathcal{R}_{1} \times \mathcal{W})$-valued random variable by
\[\lambda^{N}(A \times R \times B):=\frac{1}{N}\sum_{i=1}^{N}\delta_{x^{i,N}}(A)\delta_{\rho^{i,\infty}}(R)\delta_{W^{i,\infty}}(B),\]
for any $A\times R \times B \in \mathcal{B}(\overline{\mathcal{D}}\times \mathcal{R}_{1} \times \mathcal{W})$, where $\{x^{i,N}\}$ are the initial values.

By construction, under $\mathbb{P}_{N}$, $\{W^{i, \infty} , i \in \{1,\ldots, N\} \}$ are independent Wiener processes. By Assumption \ref{Ass2.1}(A2), we obtain that
\begin{equation}\label{Weakuniqueness5}
\mathbb{P}_{N}\circ (\lambda^{N})^{-1}\rightarrow \delta_{\Theta \circ \bar{v}^{-1}}
\end{equation}
and
\begin{eqnarray}\label{Weakuniqueness6}
&&  \limsup_{N \rightarrow \infty} \mathbb{E}_{\mathbb{P}_{N}}\bigg[\frac{1}{N}\sum_{i=1}^{N}\int_0^T|h^{\infty}_{i}(t)|^2\mathrm{d}t\bigg]\\
&=&  \limsup_{N \rightarrow \infty} \frac{1}{N}\sum_{i=1}^{N}\int_{\mathcal{R}_{1}\times\mathcal{W}}
\int_0^T|\int_{\mathbb{R}^{d_1}}yr_t(\mathrm{d}y)|^2\mathrm{d}t\bar{\lambda}(\mathrm{d}r|x^{i,N},w)\Theta_{\mathcal{W}}(\mathrm{d}w)\nonumber\\
&=&  \mathbb{E}_{\Theta}\bigg[ \int_{0}^{T}|\int_{\mathbb{R}^{d_1}}y\rho_{t}( \mathrm{d}y)|^2 \mathrm{d}t \bigg]
      \leq \mathbb{E}_{\Theta}\bigg[ \int_{\mathbb{R}^{d_1} \times [0,T] }|y|^2 \rho(\mathrm{d}y \times \mathrm{d}t) \bigg]<\infty.\nonumber
\end{eqnarray}

In analogy with \eqref{V2}, for each $N \in \mathbb{N}$, define a $\mathcal{P}(\mathcal{Z})$-valued random variable by
\[\tilde {Q}^N (A \times R \times B):= \frac{1}{N} \sum_{i=1}^N \delta_{\tilde X^{i,N}}(A) \cdot \delta_{\rho^{i,\infty}}(R)
\cdot \delta_{ W^{i, \infty}}(B),\]
for any $A\times R \times B \in \mathcal{B}(\mathcal{Z})$, where $(\tilde X^{1,N},\ldots, \tilde X^{N,N})$ is the solution of the system \eqref{II-eq2.3} with $h^{N}=(h^{\infty}_1,\ldots,h^{\infty}_N)$ and $\tilde{\mu}^N(t)=\frac{1}{N}\sum_{i=1}^{N}\delta_{\tilde{X}^{i,N}(t)}$ for each $t \in [0,T]$.

By \eqref{Weakuniqueness6} and Lemma \ref{tightness}, we know that $\{\tilde{Q}^N,N \in \mathbb{N}\}$ is tight. Let $\tilde{Q}$ be a limit point of $\{\tilde{Q}^N,N \in \mathbb{N}\}$ defined on some probability space $(\tilde{\Omega},  \tilde{\mathcal {F}}, \tilde{\mathbb{P}})$. According to Theorem \ref{lem2} and its proof, $\tilde{Q}_\omega \in \mathcal{P}_{\infty}$ for $\tilde{\mathbb{P}}$-almost all $\omega \in \tilde{\Omega}$. Moreover, by \eqref{Weakuniqueness5}, we derive that for  $\tilde{\mathbb{P}}$-almost all $\omega \in \tilde{\Omega}$, $\tilde{Q}_\omega \circ \bar{v}^{-1} = \Theta \circ \bar{v}^{-1}$. Therefore, by Assumption \ref{Ass2.1}(A4), for  $\tilde{\mathbb{P}}$-almost all $\omega \in \tilde{\Omega}$, $\tilde{Q}_\omega =  \Theta$. In combination with \eqref{Weakuniqueness6} and $F$ in \eqref{superbound} is bounded and continuous, the Laplace lower bound is immediate.
\vskip 0.4cm
\noindent{\bf Acknowledgement}. This work is partially supported by National Key R\&D
Program of China (No.2022YFA1006001), National Natural Science Foundation of China (Nos. 12131019, 11971456, 11721101)

\bibliographystyle{amsplain}


\end{document}